
\documentclass[a4paper,12pt,oneside]{article}
\usepackage[margin=1in]{geometry}
\usepackage[T1]{fontenc}
\usepackage{lmodern}
\usepackage{titlesec}
\usepackage{amsmath,amssymb}
\usepackage{xcolor,graphicx}
\usepackage{amsfonts}
\usepackage{amsthm}
\usepackage{amsopn}
\usepackage[caption=false]{subfig}
\usepackage{bbm}
\usepackage{hyperref}
\hypersetup{hidelinks}
\usepackage[capitalize,nameinlink]{cleveref}

\newcommand\innerp[2]{\langle #1, #2 \rangle}
\newcommand{\argmin}{\mathop{\mathrm{argmin}}} 
 
\newcommand*{\hermtrans}{^{\mathsf{T}}}
\newcommand{\metricmat}{\mathbf{R}_{\mathrm{U}}}

\newtheorem{theorem}{Theorem}[section]
\newtheorem{proposition}[theorem]{Proposition}
\newtheorem{corollary}[theorem]{Corollary}

\newtheorem{remark}{Remark}[section]
\newtheorem{definition}{Definition}[section]

\title{%
Dictionary-based model reduction for state estimation}
\author{%
  A. Nouy${}^{1}$,
  A. Pasco${}^{1}$}
\date{\medskip%
  \small %
  ${}^1$  Centrale Nantes, Nantes Université, \\ Laboratoire de Mathématiques Jean Leray UMR CNRS 6629\\
  \texttt{\{Anthony.Nouy,Alexandre.Pasco\}@ec-nantes.fr}
}

\begin{document}
\maketitle

\begin{abstract}
We consider the problem of state estimation from a few linear measurements, where the state to recover is an element of the manifold $\mathcal{M}$ of solutions of a parameter-dependent equation. 
The state is estimated using prior knowledge on $\mathcal{M}$ coming from model order reduction. 
Variational approaches based on linear approximation of $\mathcal{M}$, such as PBDW, yields a recovery error limited by the Kolmogorov width of $\mathcal{M}$. 
To overcome this issue, piecewise-affine approximations of $\mathcal{M}$ have also been considered, that consist in using a library of linear spaces among which one is selected by minimizing some distance to $\mathcal{M}$. 
In this paper, we propose a state estimation method relying on dictionary-based model reduction, where a space is selected from a library generated by a dictionary of snapshots, using a distance to the manifold. 
The selection is performed among a set of candidate spaces obtained from a set of  $\ell_1$-regularized least-squares problems. 
Then, in the framework of parameter-dependent operator equations (or PDEs) with affine parametrizations, we provide an efficient offline-online decomposition based on randomized linear algebra, that ensures efficient and stable computations while preserving theoretical guarantees.
\end{abstract}

\paragraph{Keywords.} inverse problem, model order reduction, sparse approximation, randomized linear algebra.

\paragraph{MSC Classification.}
65M32, 
62J07, 
60B20, 

\section{Introduction}
\label{sec:introduction}

This paper is concerned with state estimation (or inverse) problems that consist in the approximation (or recovery) of an element $u$ of a Hilbert space  $U$ using linear measurements and the additional knowledge that $u$ is the solution of a parameter-dependent equation \begin{equation}
\label{equ:general parametric problem}
    \mathcal{F}\big(u(\xi), \xi \big) = 0,
\end{equation}
for some unknown parameter $\xi$ in a parameter set $\mathcal{P}$. 
In other words, the sought state $u$ is an element of the so called solution manifold 
\begin{equation}
\label{equ:solution manifold}
    \mathcal{M}:= \{ u(\xi) : \xi \in\mathcal{P} \} \subset U,
\end{equation}
and we aim to approximate $u$ by using $m$ measurements $\ell_1(u)$, ..., $\ell_m(u)$, where the $\ell_i$ are continuous linear forms on $U$, along with some prior knowledge on $\mathcal{M}$ coming from model order reduction (MOR). 

A variational approach called parameterized-background data-weak (PBDW) was proposed in \cite{madayParameterizedbackgroundDataweakApproach2015}. 
It relies on the assumption that $\mathcal{M}$ can be well approximated by a low-dimensional linear space $V$, which can be obtained by, e.g., principal component analysis or greedy algorithms. 
The PBDW state estimate is then obtained as the sum of an element from the \emph{background space} $V$ and a correction  
in the \emph{observation space} $W$ generated by the Riesz representers of the linear forms $\ell_i$. 
However, the background space and the measurements should be compatible for ensuring stability of the estimation, especially in the case of noisy observations \cite{madayPBDWStateEstimation2015}. 
More precisely, 
stability is related to the alignment of spaces  $V$ and $W$.
There are two main approaches to overcome this issue. 
The first is to select carefully an observation space that is adapted to the given space $V$, e.g., by selecting the linear forms from a dictionary using a greedy algorithm \cite{binevGreedyAlgorithmsOptimal2018}. 
In practice, this approach may not perform well if the number and type of measurements (e.g., location of sensors) are restricted. 
The second approach is to consider different prior knowledge for $\mathcal{M}$, by constructing a space $V$ adapted to the measurements, e.g., using the optimization procedure from  \cite{cohenOptimalReducedModel2020a}, or 
by taking advantage of a wider knowledge coming from the construction of $V$ by a greedy algorithm, as proposed in \cite{binevDataAssimilationReduced2017,herzetPerformanceGuaranteesVariational2020}. 
In the case of noisy observations, the authors in \cite{taddeiAdaptiveParametrizedBackgroundDataWeak2017} propose a  regularized version of PBDW, that consists in considering elements from a bounded subset of the background space $V$.

The above approaches are all based on linear approximation, which may poorly perform in cases where the manifold $\mathcal{M}$ can not be well approximated by a single  linear space of small dimension, that is characterized by a slow decay of its Kolmogorov $n$-width 
\begin{equation}
\label{equ:kolmogorov m width}
    d_n(\mathcal{M})_U := 
    \inf_{\text{dim} V = n }
    \sup_{u \in\mathcal{M}} \min_{v \in V}
    \|u - v \|_U,
\end{equation}
which is the best worst-case error we can expect when approximating   $\mathcal{M}$ by a $n$-dimensional subspace $V$ in $U$. 

Hence, there are incentives to consider non-linear approximations of $\mathcal{M}$, like the piecewise-affine approximation proposed in \cite{cohenNonlinearReducedModels2022}, which relies on a library of  subspaces among which one space is selected by minimizing some  distance to the manifold, or the  approach based on manifold approximation \cite{cohenNonlinearApproximationSpaces2022a}.  

\subsection{Contributions and outline}

The outline is as follows.  
First in \Cref{sec:the onespace problem} we recall the original PBDW approach from \cite{madayParameterizedbackgroundDataweakApproach2015}, which was called the one-space problem in \cite{binevDataAssimilationReduced2017}. 
We also recall the error bounds for this approach, as well as its limitations.

Then in \Cref{sec:the multi-space problem} we describe a general multi-space problem, in which the background space $V$ is selected adaptively among a library $\mathcal{L}_n^N$ containing $N$ subspaces of dimension at most $n$. 
This is associated to another benchmark error, with faster decay than $d_n(\mathcal{M})_U$, which is the nonlinear Kolmogorov width from  
\cite{temlyakovNonlinearKolmogorovWidths1998} defined by
\begin{equation}
\label{equ:non linear kolmogorov n width}
    d_n(\mathcal{M}, N)_U := \inf_{\#\mathcal{L}_n^N=N} 
    \sup_{u\in\mathcal{M}} 
    \min_{V \in \mathcal{L}_n^N}   \min_{v \in V}
    \|u - v \|_U,
\end{equation}
where the infimum is taken over all libraries $\mathcal{L}_n^N$ of $N$ subspaces of dimension $n$.
The space is selected using a surrogate distance $\mathcal{S}$ to the manifold $\mathcal{M}$ and the associated recovery error is controlled, under certain assumptions, by the best recovery error among $\mathcal{L}_n^N$. 
This approach was introduced in \cite{cohenNonlinearReducedModels2022} for a specific choice of library. 
Here, it is presented in a general setting, but it is not a contribution per say. 

In \Cref{sec:randomized multi-space problem}, especially in \Cref{subsec:randomized selection criterion}, we present the first contribution of the present work, that is a randomized version $\mathcal{S}^{\Theta}$ of the selection method for the general multi-space problem, with a random linear mapping $\Theta$ based on randomized linear algebra (RLA) techniques for the estimation of $\mathcal{S}$. 
We prove that our randomized approach provides a priori error bounds similar to the non-randomized approach with high (user-defined) probability. 
This approach makes feasible an offline-online decomposition, with reasonable offline costs, low online costs and robustness to round-off errors.

The second and main contribution of our work is introduced in \Cref{sec:dictionary approach for inverse problem}, especially in \Cref{subsec:dictionary based multispace,subsec:offline-online decomposition}.
We propose a state estimation strategy using dictionary-based reduced order models \cite{balabanovRandomizedLinearAlgebra2021, kaulmannOnlineGreedyReduced2013}.
It consists in using a library of background low-dimensional spaces generated by a large dictionary $\mathcal{D}_K$ of $K$ snapshots in $\mathcal{M}$. 
Our approach is an adaptation of the approach from    
\cite{cohenNonlinearReducedModels2022} to a setting where we have access to a combinatorial number $\binom{K}{n}$ of candidate background spaces of low dimension $n$. 
With this dictionary-based approach, it is expected to  select a low-dimensional  background space among a huge candidate set, yielding a good approximation of  $\mathcal{M}$ and a good stability of the estimation problem in the case where only a few measurements are available.

We also provide for our approach an efficient offline-online decomposition in the framework of parameter-dependent operator equations (or PDEs) with affine parametrization. 
Indeed, classical offline computations for residual-based quantities often lead to prohibitive offline costs when it comes to dictionary-based approaches, as pointed out in \cite{kaulmannOnlineGreedyReduced2013, balabanovRandomizedLinearAlgebra2021}.
They also tend to be sensible to round-off errors. 
We solve this issue by using the randomized quantity $\mathcal{S}^{\Theta}$  that we proposed in \Cref{subsec:randomized selection criterion}. 

Finally in \Cref{sec:numerical examples} we test our approach on two different numerical examples. 
First a classical thermal-block diffusion problem, then an advection-diffusion problem.

\subsection{Setting and notations}
Instead of referring to the measurements $\ell_1(u)$, ..., $ \ell_m(u)$, we will equivalently  consider the orthogonal projection $w = P_W u $ of the state $u$ onto the observation space 
\begin{equation}
\label{equ:observation space def}
    W := \text{span} \{ R_U^{-1} \ell_i: 1\leq i\leq m \} \subset U,
\end{equation}
where $R_U : U \rightarrow U'$ is the Riesz map, which is the unique linear isometry from $U$ to its dual $U'$, from the Riesz Representation Theorem.
We assume (w.l.o.g.) that the linear forms are linearly independent, which implies that the space $W$ is of dimension $m$. 
The problem of state estimation is equivalent  to finding a recovery map $A : W \to U$ which associates to an observation $w \in W$ an element  $A(w) \in U$. 

Our methodology is valid for real or complex Hilbert spaces, but for the sake of simplicity, we restrict the presentation to the real case. 
In practice, although most of our results are valid for a general Hilbert setting, the space $U$ is of finite  dimension 
\[
\mathcal{N} := \text{dim}(U).
\]
In the context of PDEs, the space arises from some discretization (e.g., based on finite elements, finite volumes...) and its dimension $\mathcal{N}$ is usually very large to ensure high fidelity towards the true solution. 
The space $U$ and its dual $U'$ are then identified with  $\mathbb{R}^{\mathcal{N}}$, respectively endowed with inner products 
$\langle\cdot, \cdot\rangle_U = \langle\cdot, \metricmat \cdot \rangle$ and $\langle\cdot, \cdot\rangle_{U'} = \langle \cdot, \metricmat^{-1} \cdot \rangle$, where $\langle \cdot, \cdot \rangle$ denotes the canonical $\ell_2$-inner product in $\mathbb{R}^{\mathcal{N}}$, and where $\metricmat\in\mathbb{R}^{\mathcal{N}\times \mathcal{N}}$ is the positive definite symmetric  matrix associated with the Riesz map $R_U$.
Discrete representations of operators and vectors will be systematically written with bold notations.

\section{The one-space problem, or PBDW}
\label{sec:the onespace problem}

The one-space problem was initially called Parameterized-Background Data Weak (PBDW) and formulated in \cite{madayParameterizedbackgroundDataweakApproach2015}. 
We denote by $A_V : W \rightarrow U$ the corresponding recovery map for some background space $V$. 
In this section, we consider that the only knowledge we have on $\mathcal{M}$ is that it is well approximated by a $n$-dimensional space $V_n$, with
\begin{equation*}
    \text{dist}(V_n, \mathcal{M}) = \sup_{u\in \mathcal{M}} \min_{v\in V_n} \Vert u - v \Vert \leq \varepsilon_n.
\end{equation*}
Note that for the one-space problem, the precision $\varepsilon_n$ is not required to be known to compute the state estimate, while it is the case in the nested multi-space problem in \Cref{subsec:nested multi-space}.

\subsection{Variational formulation}
\label{subsec:variational formulation}

The PBDW problem was first formulated as a variational problem. 
For an observation $w = P_{W} u$, the recovery $A_{V_n}(w)$ is obtained by selecting the background term $v^*\in V_n$ which requires the smallest update $\eta^*\in W$. 
In other words,
\begin{align}
\label{equ:pbdw ls formulation}
 A_{V_n}(w) := v^* + \eta^* \quad \text{with} \quad \left\{\begin{aligned}
    &v^* := \arg\min_{v\in V_n} \|P_{W} (u - v)\|_U \\
     &\eta^* := P_{W} (u - v^*)
    \end{aligned}\right..
\end{align}
In a finite dimensional setting, with $U = \mathbb{R}^{\mathcal{N}}$, the problem \eqref{equ:pbdw ls formulation} has the following  algebraic form:
\begin{align}
\label{equ:pbdw ls formulation algebraic}
A_{V_n}(w) := v^* + \eta^* \quad \text{with} \quad \left\{
\begin{aligned}
    & v^* = \mathbf{V} \mathbf{v}^*, \quad \mathbf{v}^* := \arg\min_{\mathbf{v} \in \mathbb{R}^n} 
    \|\mathbf{C} \mathbf{v} - \mathbf{w}\|_{2},& \\
    & \eta^* =\mathbf{W} \boldsymbol{\eta}^*, \quad \boldsymbol{\eta}^* := \mathbf{w} - \mathbf{C} \mathbf{v}^*,&
\end{aligned}
\right.
\end{align}
where $\mathbf{V} \in \mathbb{R}^{\mathcal{N} \times n}$ and $\mathbf{W} \in \mathbb{R}^{\mathcal{N} \times m}$ are the matrices whose columns form an orthonormal basis of $V_n$ and $W$ respectively, meaning that $\mathbf{V}\hermtrans \metricmat \mathbf{V} = \mathbf{I}_n$ and $\mathbf{W}\hermtrans \metricmat \mathbf{W} = \mathbf{I}_m$.
The matrix $\mathbf{C = W\hermtrans \metricmat V} \in \mathbb{R}^{m\times n}$ is the cross gramian matrix, and $\mathbf w \in \mathbb{R}^{m}$ is such that $w = \mathbf{W w}$. 
Problem \eqref{equ:pbdw ls formulation} is well-posed if and only if $V_n ~\cap~ W^{\perp} = \{0\}$. 
This is equivalent to 
$\sigma_{\text{min}}(\mathbf{C})>0$ with $n\leq m$. 
For the PBDW estimation, we assume that this condition is satisfied.
The analysis from \cite{binevDataAssimilationReduced2017} then ensures the error bound
\begin{equation}
\label{equ:pbdw error bound}
    \|u - A_{V_n}(w)\|_U 
    \leq \mu(V_n, W) \text{dist}(V_n + W, \mathcal{M})
    \leq \mu(V_n, W) \varepsilon_n,
\end{equation}
where $V_n+W$ is the set spanned by all sums of elements of $V_n$ and $W$, and where $\mu(V_n, W)$ is the inverse of the smallest singular value $\beta(V_n, W)$ of the projection operator $P_{W}$ restricted to $V_n$. 
These constants are defined by
\begin{align}
\label{equ:pbdw infsup constants}
\begin{aligned}
    \mu(V_n, W) &:= \sup_{\eta \in W^{\perp}} \frac{\|\eta\|_U}{\|\eta - P_{V_n} \eta\|_U} 
    = \sigma_{\text{min}}(\mathbf C)^{-1}, \\
    \beta(V_n, W) &:= \inf_{v \in V_n} \frac{\|P_{W} v\|_U}{\|v\|_U} 
    = \inf_{v \in V_n} \sup_{w \in W} \frac{\langle v, w \rangle_U}{\|v\|_U \|w\|_U} 
    = \sigma_{\text{min}}(\mathbf C).
\end{aligned}
\end{align}

\subsection{Geometric formulation and optimality}
\label{subsec:Geometric formulation and optimality}

The geometric interpretation of the problem described in \Cref{subsec:variational formulation} was provided and analyzed in \cite{binevDataAssimilationReduced2017}. 
In the one-space problem, the manifold can be considered as the  cylinder $\mathcal{K}^n$ defined by
\begin{equation}
\label{equ:definition cylinder around manifold}
    \mathcal{K}^n := \{ v \in U : \text{dist}(v, V_n) \leq \varepsilon_n \}.
\end{equation}
Given an observation $w = P_W u $, all the knowledge we have about $u$ is that $u \in \mathcal{K}^n_w$, where 
\begin{equation}
\label{equ:observation ellipsoid}
    \mathcal{K}^n_w := \mathcal{K}^n \cap  U_w, \quad 
    U_w := w + W^{\perp}.
\end{equation}
The optimal recovery map, in the sense of the worst case error in $\mathcal{K}^n$, is the one mapping $w$ to the Chebyshev center of $\mathcal{K}^n_w$, where the Chebyshev center is defined for any bounded set $\mathcal{X} \subset U$ by
\begin{equation}
\label{equ:Chebyshev center}
    \text{cen}(\mathcal X) := 
    \argmin_{v \in U} r_{\mathcal{X}}(v)
    \quad \text{where} \quad 
    r_{\mathcal{X}}(v) := 
    \min \{ r': \mathcal X \subset \overline{\mathcal{B}(v, r')} \}.
\end{equation}
Note that \cite[Lemma 2.1]{binevDataAssimilationReduced2017} ensures that both $r_{\mathcal{X}}$ and cen$(\mathcal{X})$ are well defined.
It has been shown in \cite[Theorem 2.8]{binevDataAssimilationReduced2017} that $A_{V_n}$ is the optimal map when the manifold is the cylinder $\mathcal{K}^n$. 
In other words $A_{V_n}(w) = \text{cen}(\mathcal{K}^n_{w})$ and
\begin{equation}
    A_{V_n} = \argmin_{A:W \rightarrow U}
    \sup_{u \in \mathcal{K}^n} \|u - A(P_W u)\|_U,
\end{equation}
where the minimum is taken over all mappings from $W$ to $U$, including non-linear ones. 
In \Cref{fig:pbdw geometric} we illustrate  this geometric formulation.

\begin{figure}[!ht]
    \centering
    \subfloat{\label{fig:a}\includegraphics[page=1, width=0.4\textwidth]{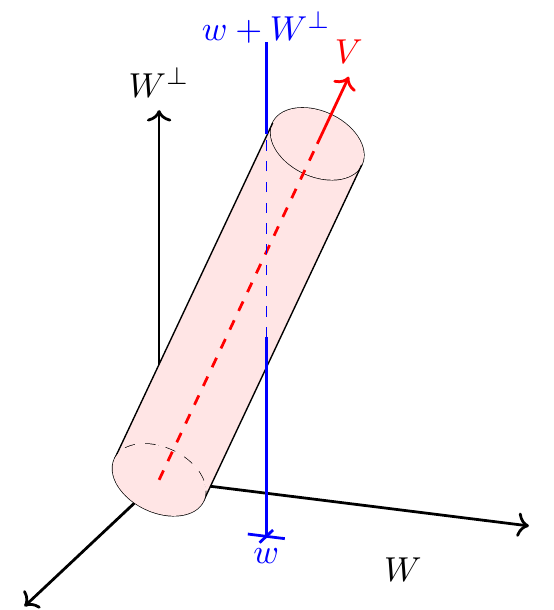}}
    \hspace{2cm}
    \subfloat{\label{fig:b}\includegraphics[page=2, width=0.21\textwidth]{figures.pdf}}
    \caption[PBDW geometric]{Geometric interpretation of the PBDW recovery for one observation $w=P_W u$, with dim$(W)=2$ and dim$(V)=1$.}
    \label{fig:pbdw geometric}
\end{figure}

The one-space problem  may have bad performances when the background space $V_n$ is not well aligned with the observation space $W$, which leads to a high $\mu(V_n, W)$, or when the manifold cannot be well approximated by a single linear space, which leads to a high $\varepsilon_n$. 
The approximation by a single subspace is inherently limited by the Kolmogorov $m$-width via the bound $\varepsilon_n \geq d_m(\mathcal{M})_U$ because $n \leq m$.

\section{The multi-space problem}
\label{sec:the multi-space problem}

Several approaches have been proposed for addressing the issues of the one-space approach \cite{cohenOptimalReducedModel2020a, herzetPerformanceGuaranteesVariational2020,cohenNonlinearReducedModels2022}.
These are \emph{multi-space} approaches that consist in exploiting a library of $N$ spaces $V_1$, ..., $V_N$.

First in \Cref{subsec:nested multi-space} we present the multi-space approach as first described in \cite{binevDataAssimilationReduced2017}. 
Then in \Cref{subsec:general approach} we present the multi-space approach proposed in \cite{cohenNonlinearReducedModels2022}, with a selection method based on some surrogate distance to the manifold, and recall the associated oracle error bound which holds not only in the piecewise-affine framework considered in \cite{cohenNonlinearReducedModels2022}. 
This piecewise affine framework is considered in \Cref{subsec:piecewise multi-space}. 
Finally in \Cref{subsec:parametric pdes}, we focus on the case where problem \eqref{equ:general parametric problem} is a parameter-dependent PDE, which provides (under suitable conditions) a natural residual-based surrogate distance.

\subsection{Nested multi-space}
\label{subsec:nested multi-space}
The library corresponding to the multi-space approach from \cite{binevDataAssimilationReduced2017} is constituted of a nested sequence of subspaces with increasing approximation powers. 
In this subsection, we consider a library composed of $N = n \leq m$ nested subspaces such that 
\begin{equation*}
    V_1 \subset \cdots \subset V_n,
    \quad \text{dim}(V_j) = j,
    \quad \text{dist}(\mathcal{M}, V_j) \leq \varepsilon_j,
    \quad 1\leq j\leq n.
\end{equation*}
For an observation $w\in W$ we consider the compact set
\begin{equation}
\label{equ:multi-space observed ellipsoid}
    \mathcal{K}^{\text{mult}}_w := U_w \cap \mathcal{K}^{\text{mult}},
    \quad \mathcal{K}^{\text{mult}} := \bigcap_{j=1}^n \mathcal{K}^j, \quad U_w := w + W^{\perp}.
\end{equation}
where the cylinders $\mathcal{K}^j$ for $1\leq j\leq n$ are defined by \eqref{equ:definition cylinder around manifold}, as for the one-space problem. The optimal recovery map for the worst case error would be the map $w \mapsto \text{cen}(\mathcal{K}^{\text{mult}}_w)$.
However, contrary to the one-space problem, this optimal map is not computable in general. In \cite{binevDataAssimilationReduced2017}, the authors propose to select any map $A^{\text{mult}}$ such that 
\begin{equation}
\label{equ:nested multi-space map 1}
    A^{\text{mult}}(w) \in \mathcal{K}^{\text{mult}}_w.
\end{equation}
They showed that any map satisfying \eqref{equ:nested multi-space map 1} satisfies the oracle error bound
\begin{equation}
\label{equ:nested multi-space map 1 error bound}
    \sup_{u\in\mathcal K^{\text{mult}}} \|u - A^{\text{mult}}(P_W u)\| \leq 
    \sqrt{2} \min_{1\leq j \leq n} \mu(V_j, W) \varepsilon_j.
\end{equation}
They also propose numerical algorithms to compute such a map. 
It is important to note that the proposed algorithms require to know the widths $(\varepsilon_j)_{1\leq j\leq n}$ to be performed, whereas it is not the case for the one-space problem. 
This approach allows to take advantage of the best compromise possible between $\varepsilon_j$ and $\mu(V_j, W)$ among all the available spaces. 
However it is not well adapted to problems with slow decay of Kolmogorov $n$-width, similarly as the one-space problem.

We end this subsection by citing the works from \cite{herzetPerformanceGuaranteesVariational2020} which also consider the nested multi-space framework. 
The authors propose a recovery map $A$ such that $A(w) \in V_n \cap \mathcal{K}^{\text{mult}}$ minimizes the update $\|w - P_W A(w)\|_U$. 
Note that \cite{herzetPerformanceGuaranteesVariational2020} also extends the multi-space problem to the case $m<n$ and provides error bounds, ensuring stability when increasing the knowledge with $n-m$ new spaces with better approximation power, although the error bounds provided are not sharp. 
We also cite the works from \cite{cohenOptimalReducedModel2020a} in which the background space is obtained via a convex optimization problem.

\subsection{General approach}
\label{subsec:general approach}

The general multi-space approach consists in using not a single space $V_n$ as background, as for the initial PBDW approach, but rather a library 
\begin{equation*}
    \mathcal{L}_n^N := \{V_1, \cdots, V_N\}
\end{equation*}
of $N$ subspaces of dimension at most $n$. We suppose that
\begin{equation*}
    n_k := \text{dim}(V_k) \leq n \leq m,
    \hspace{5mm} 1\leq k\leq N.
\end{equation*}
The benchmark error for this non-linear model order reduction method is the library-based non linear Kolmogorov $n$-width from \cite{temlyakovNonlinearKolmogorovWidths1998} defined by \eqref{equ:non linear kolmogorov n width}.
For some $N=N(n)$, the library width $d_n(\mathcal{M}, N(n))_U$ may have a (much) faster decay with $n$  than $d_n(\mathcal{M})_U$, so that a small error may be obtained with spaces of low dimension $n$, which can be crucial for our state estimation problem. 
To each subspace is associated the one-space recovery map
\begin{equation}
\label{equ:one map in multi-space}
    A_k := A_{V_k}.
\end{equation}
For a given observation $w$, it was proposed in \cite{cohenNonlinearReducedModels2022} to select one of those maps by minimizing some surrogate distance to the manifold $\mathcal{S}(\cdot, \mathcal{M})$. 
The idea is to select $k^* = k^*_{\mathcal{S}}(w)$ whose associated one-space recovery is the closest to the manifold, in the sense that it minimizes $\mathcal{S}(\cdot, \mathcal{M})$. 
This recovery is then denoted as $A^{\text{mult}}_{\mathcal{S}}(w)$. 
In other words,
\begin{equation}
\label{equ:multi-space space selection}
    A^{\text{mult}}_{\mathcal{S}}(w) := A_{k^*}(w)
    \quad\text{with} \quad 
    k^* \in \argmin_{1\leq k\leq N} ~ \mathcal{S}(A_k (w), \mathcal{M}).
\end{equation}
The best choice for $\mathcal{S}$ would be the distance based on the $\|\cdot \|_U$ norm. 
However, this approach may not be computationally feasible in most practical cases where $\mathcal{N}$ is large and elements in $\mathcal{M}$ are expensive to compute or store. 
Still, assuming that $\mathcal{S}(\cdot, \mathcal{M})$ controls the true distance to $\mathcal{M}$, interesting bounds can be shown. 
Let us assume that there exist constants $0< c \leq C <  + \infty$ such that
\begin{equation}
\label{equ:distance controled by surrogate}
    c ~ \text{dist}(v,\mathcal{M}) 
    \leq \mathcal{S}(v,\mathcal{M})
    \leq C ~ \text{dist}(v,\mathcal{M}).
\end{equation}
Then in \cite[Theorem 3.4]{cohenNonlinearReducedModels2022}, the authors manage to control the error of the selected estimate by the best recovery error among the library. 
Although they work with a library built by a piecewise affine model reduction approach, their result still holds for a general library. 
They introduce and analyse the constant $\mu(\mathcal{M}, W)$ defined by
\begin{subequations}\label{equ:general stability constant}
\begin{equation}
    \mu(\mathcal{M}, W) := \frac{1}{2} \sup_{\sigma >0} 
    \frac{\delta_{\sigma} - \delta_0}{\sigma}
    \end{equation}
with 
\begin{equation}
    \delta_{\sigma} := \max_{w\in W} \text{diam}(\mathcal{M}_{\sigma}\cap U_w)
    \quad \text{and} \quad 
    \mathcal{M}_{\sigma} := \big\{
    v \in U : \text{dist}(v, \mathcal{M}) \leq \sigma
    \big\},
\end{equation}
\end{subequations}
where $\mathcal{M}_\sigma$ is the $\sigma$-fattening of $\mathcal{M}$. 
The constant $\mu(\mathcal{M}, W)$ reflects the stability of the recovery problem, independently on the method used for the approximation of $\mathcal{M}$.
In other words, it reflects how well $\mathcal{M}$ and $W$ are aligned. 
The preferable case is when $\delta_0=0$.
It is equivalent to $P_W$ being injective on $\mathcal{M}$. 
In this case, the error on $A_{\mathcal{S}}^{\text{mult}}$ is truly controlled by the best possible error within the library, as shown in \Cref{prop:general oracle bound}.

\begin{proposition}
\label{prop:general oracle bound}
Assume that $\mu(\mathcal{M}, W) < \infty$, then for all $u\in\mathcal{M}$,
\begin{equation}
\label{equ:general oracle bound}
    \| u - A^{\text{mult}}_{\mathcal{S}}(P_W u) \|_U 
    \leq \delta_0 + 2 \kappa \mu(\mathcal{M}, W) 
    \min_{1\leq k \leq N} \|u - A_{k}(P_W u) \|_U,
\end{equation}
with $\kappa := \frac{C}{c}$ and $c,C$ are the constants from \eqref{equ:distance controled by surrogate}.
\end{proposition}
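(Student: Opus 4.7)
\medskip

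The plan is to exploit the fact that both $u$ and the selected estimate $A^{\text{mult}}_{\mathcal{S}}(P_W u)$ lie in the same observation slice $U_w$, and to show that they also lie in a common fattening $\mathcal{M}_\sigma$ of the manifold, so that their distance can be controlled by $\delta_\sigma$. Fix $u \in \mathcal{M}$ and set $w = P_W u$. First I would observe that for every $k$, the PBDW estimate $A_k(w)$ satisfies $P_W A_k(w) = w$, because of the correction term $\eta^*$ in \eqref{equ:pbdw ls formulation}. Hence $A_k(w) \in U_w$, and in particular $A^{\text{mult}}_{\mathcal{S}}(w) = A_{k^*}(w) \in U_w$, while $u \in \mathcal{M} \cap U_w$.

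Next I would introduce the key quantity $\sigma^* := \text{dist}(A_{k^*}(w), \mathcal{M})$, so that $A_{k^*}(w) \in \mathcal{M}_{\sigma^*}$ and trivially $u \in \mathcal{M} \subset \mathcal{M}_{\sigma^*}$. Both points then belong to $\mathcal{M}_{\sigma^*} \cap U_w$, which gives
\begin{equation*}
    \|u - A_{k^*}(w)\|_U \leq \text{diam}\bigl(\mathcal{M}_{\sigma^*}\cap U_w\bigr) \leq \delta_{\sigma^*}.
\end{equation*}
From the very definition \eqref{equ:general stability constant} of $\mu(\mathcal{M},W)$, one has $\delta_\sigma - \delta_0 \leq 2\mu(\mathcal{M},W)\, \sigma$ for every $\sigma > 0$ (and trivially for $\sigma = 0$), so
\begin{equation*}
    \|u - A_{k^*}(w)\|_U \leq \delta_0 + 2\mu(\mathcal{M},W)\, \sigma^*.
\end{equation*}

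It remains to bound $\sigma^*$ by $\kappa \min_k \|u - A_k(w)\|_U$. This is where the surrogate property \eqref{equ:distance controled by surrogate} and the optimality of $k^*$ enter. Using the lower bound in \eqref{equ:distance controled by surrogate}, then the minimizing property of $k^*$ in \eqref{equ:multi-space space selection}, then the upper bound in \eqref{equ:distance controled by surrogate}, and finally the fact that $u \in \mathcal{M}$ so $\text{dist}(A_k(w),\mathcal{M}) \leq \|u - A_k(w)\|_U$, I would chain
\begin{equation*}
    c\, \sigma^* \leq \mathcal{S}(A_{k^*}(w),\mathcal{M}) \leq \mathcal{S}(A_k(w),\mathcal{M}) \leq C\, \text{dist}(A_k(w),\mathcal{M}) \leq C\, \|u - A_k(w)\|_U
\end{equation*}
for every $k$, which yields $\sigma^* \leq \kappa \min_k \|u - A_k(w)\|_U$. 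Substituting into the previous display gives \eqref{equ:general oracle bound}.

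I do not expect any real obstacle: the proof is essentially a bookkeeping of the definitions. The only mildly subtle step is recognizing that the selected estimate stays in $U_w$, which is what allows the diameter bound $\delta_{\sigma^*}$ to be used; once that is in place, the affine growth of $\delta_\sigma$ in $\sigma$ and the surrogate equivalence combine immediately to give the stated bound.
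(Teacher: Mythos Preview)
Your proof is correct and follows essentially the same route as the paper: both arguments show that $u$ and $A_{k^*}(w)$ lie in $\mathcal{M}_\sigma \cap U_w$ for a suitable $\sigma$, bound the error by $\delta_\sigma \leq \delta_0 + 2\mu(\mathcal{M},W)\sigma$, and then control $\sigma$ via the surrogate equivalence \eqref{equ:distance controled by surrogate} and the optimality of $k^*$. The only cosmetic difference is that the paper introduces $\lambda := \text{dist}(A_{k_{\text{best}}}(w),\mathcal{M})$ and bounds $\sigma^* \leq \kappa\lambda$ before invoking $\lambda \leq \|u - A_{k_{\text{best}}}(w)\|_U$, whereas you go directly from $\sigma^*$ to $\kappa\min_k\|u - A_k(w)\|_U$ in one chain.
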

\begin{proof}
    Let us pick the $k_{\text{best}}$-th subspace from $\mathcal{L}_n^N$ as the space associated to the best one-space recovery,
    \[
    k_{\text{best}}=\argmin_{1 \leq k \leq N} \| u - A_k(w) \|_U.
    \]
    Using \eqref{equ:distance controled by surrogate} and the definition of $k^*$ from \eqref{equ:multi-space space selection}, it follows that
    \[
    \text{dist}(A_{k^*}(w), \mathcal{M}) 
    \leq \frac{1}{c} \mathcal{S}(A_{k^*}(w), \mathcal{M}) 
    \leq \frac{1}{c} \mathcal{S}(A_{k_{\text{best}}}(w), \mathcal{M}) 
    \leq \frac{C}{c} \text{dist}(A_{k_{\text{best}}}(w), \mathcal{M}), 
    \]
    that is 
    \[\text{dist}(A_{k^*}(w), \mathcal{M}) \leq \kappa \lambda,
    \quad \text{with}\quad  
    \lambda := \text{dist}(A_{k_{\text{best}}}(w), \mathcal{M}).
    \]
    Hence, since $A_{k^*}(w) \in U_w$ by definition \eqref{equ:pbdw ls formulation},
    $A_{k^*}(w) \in \mathcal{M}_{\kappa \lambda} \cap U_w$. Since $u$ also lies in this set, the recovery error using $A^{\textup{mult}}_{\mathcal{S}} = A_{k^*}$ is bounded via
    \[
    \| u - A^{\textup{mult}}_{\mathcal{S}}(w) \|_U 
    \leq \text{diam}(\mathcal{M}_{\kappa \lambda} \cap U_w)
    \leq \delta_{\kappa \lambda}.
    \]
    We now distinguish the cases $\lambda=0$ and $\lambda>0$. 
    If $\lambda=0$, we have $\| u - A^{\textup{mult}}_{\mathcal{S}}(w) \|_U \leq \delta_0$, which   implies \eqref{equ:general oracle bound}. 
    If $\lambda >0$,  using  
    \[
    \delta_{\kappa \lambda}
    \leq \delta_0 + 2\kappa \lambda \mu(\mathcal{M}, W),
    \]
    and $\lambda \leq \|u - A_{k_{\text{best}}}(w) \|_U$ (since $u\in \mathcal{M}$), we also obtain  \eqref{equ:general oracle bound}. 
\end{proof}

This general library-based approach allows to perform one-space recovery in background spaces of low dimension, hence potentially good $\mu(V_{k^*}, W)$, while ensuring good approximation power of the selected background space. 
However, as stated in \cite{balabanovRandomizedLinearAlgebra2021}, some problems require a very large number $N$ of spaces to ensure good precisions $(\varepsilon_k)_{1\leq k\leq N}$. 
Hence testing each of the recovery maps $A_k$ for every new observation $w\in W$ can be a computational burden online.
Our dictionary-based approach described in \Cref{sec:dictionary approach for inverse problem}, as well as  our random sketching approach in \Cref{subsec:randomized selection criterion}, aim to circumvent this issue.

\subsection{Piecewise multi-space}
\label{subsec:piecewise multi-space}

This subsection is devoted to the case where the spaces in the library are obtained via partitioning of the parameter space $\mathcal{P}$ and the manifold $\mathcal{M}$, such that
\begin{equation}
\label{equ:partition of P and M}
    \mathcal{P} = \bigcup_{k=1}^N \mathcal{P}^{(k)},
    \hspace{5mm}
    \mathcal{M} = \bigcup_{k=1}^N \mathcal{M}^{(k)},
    \hspace{5mm} 
    \mathcal{M}^{(k)} = \{u(\xi):\xi\in\mathcal{P}^{(k)}\}.
\end{equation}
The piecewise multi-space approach  then consists in building $N$  spaces with moderate dimension, where the space $V_k$ approximates the piece $\mathcal{M}^{(k)}$.

Such nonlinear, local model reduction approach has already been considered in various works, with mainly two different approaches.
The first one is local in the parameter space, as considered in \cite{drohmannAdaptiveReducedBasis2011,eftangHpCertifiedReduced2010,eftangHpCertifiedReduced2011,haasdonkTrainingSetMultiple2011a}. 
It consists in building the partition of $\mathcal{P}$ using a refinement method, then approximating each local piece $\mathcal{M}^{(k)}$.
This corresponds to the framework considered in \cite{cohenNonlinearReducedModels2022} for state estimation.
The second one is local in the state space, as considered in \cite{amsallemNonlinearModelOrder2012,washabaughNonlinearModelReduction2012,peherstorferLocalizedDiscreteEmpirical2014,amsallemFastLocalReduced2015,amsallemPEBLROMProjectionerrorBased2016}, which consists in building directly the local pieces $(\mathcal{M}^{(k)})_{1\leq k\leq N}$, without explicitly partitioning $\mathcal{P}$, using clustering methods such as $K$-means.

Recovery algorithms using such reduced spaces were considered in \cite{cohenNonlinearReducedModels2022}, and it was the initial framework for the general approach described in \Cref{subsec:general approach}. 
They also covered the case of a library of affine spaces, but we do not elaborate on it in the present paper. 
In this subsection, we consider a library
\begin{equation*}
    \mathcal{L}_n^N = \{V_k : 1\leq k \leq N \}
    \quad  \text{with} \quad 
    \text{dim}(V_k) = n_k \leq n \leq m,
\end{equation*}
with the one-space constants associated to each sub-manifold,
\begin{equation*}
\begin{gathered}
    \varepsilon_k := \text{dist}( \mathcal{M}^{(k)}, V_k ),
    \quad \mu_k := \mu(V_k, W),
    \quad 1\leq k\leq N.
\end{gathered}
\end{equation*}
In this framework, using the general approach described in \Cref{subsec:general approach}, it is possible to ensure error bounds depending on the quantity $\mathcal{S}$ used for model selection. 
If $\mathcal{S}$ satisfies equation \eqref{equ:distance controled by surrogate}, then for $\sigma := \max_{1\leq k\leq N} \varepsilon_k \mu_k$ we have the error bound \cite[Theorem 3.2]{cohenNonlinearReducedModels2022}
\begin{equation}
\label{equ:piecewise affine error bound}
    \|u - A^{\text{mult}}_{\mathcal{S}}(P_W u) \|_U \leq
    \delta_{\kappa\sigma},
\end{equation}
with $\delta_{\kappa\sigma}$ defined in \eqref{equ:general stability constant}. 
This piecewise multi-space approach may suffer from computational problems when the required number $N$ of subspaces becomes too large to satisfy a desired precision. 

For example $N$ may become too large when we generate the local pieces by partitioning the parameter space as in \cite{amsallemNonlinearModelOrder2012,peherstorferLocalizedDiscreteEmpirical2014,amsallemPEBLROMProjectionerrorBased2016,cohenNonlinearReducedModels2022}.
As pointed out in \cite{peherstorferLocalizedDiscreteEmpirical2014,amsallemPEBLROMProjectionerrorBased2016}, this can be caused by a poor iterative partitioning of $\mathcal{P}$, or by a troublesome property of the mapping $\xi \mapsto u(\xi)$ such as periodicity.
A way to circumvent this is using a local approach in the state space as in \cite{amsallemNonlinearModelOrder2012,washabaughNonlinearModelReduction2012,peherstorferLocalizedDiscreteEmpirical2014,amsallemFastLocalReduced2015,amsallemPEBLROMProjectionerrorBased2016}, although to the best of our knowledge, this has not yet been investigated in the inverse problem setting we consider here.

Another example of $N$ becoming too large is when the library-width $d_n(\mathcal{M},N)_U$, defined by \eqref{equ:non linear kolmogorov n width}, has a slow decay with respect to $N$ for fixed $n$.

Both of those issues may be circumvented by the dictionary-based approach proposed in \Cref{sec:dictionary approach for inverse problem}, since it is able to generate a wide variety of subspaces, independently on the parametrization.

\subsection{Parameter-dependent operator equations}
\label{subsec:parametric pdes}

In this subsection we assume that the parametric problem \eqref{equ:general parametric problem} is actually a parameter-dependent  
operator equation (or PDE)
\begin{equation}
\label{equ:parametric pde}
    B(\xi) u(\xi) = f(\xi)
\end{equation}
where $B(\xi): U \to U'$ is the operator and $f(\xi) \in U'$ the right-hand side. In this framework, it is common to consider a residual-based error as a surrogate to the error in the norm $\|\cdot\|_U$. We define for all $v\in U$,
\begin{subequations}\label{equ:S2 and residual norm definition}
\begin{equation}
    \mathcal{S} (v, \mathcal{M}) = \min_{\xi \in \mathcal{P}} \mathcal{R}(v , \xi),
    \quad
    \end{equation}
    with
\begin{equation}    
    \mathcal{R}(v ,  \xi) := \| r(v , \xi) \|_{U'} , \quad    
    r(v,\xi) := B(\xi) v - f(\xi) \in U', 
    \quad\forall \xi\in\mathcal{P}.
\end{equation}
\end{subequations}
For the quantity $\mathcal{S}$ to be equivalent to the error $\| u -v \|_U$, and in order to derive error bounds, some assumptions are required on the operator $B(\xi)$. 
In the case where $B(\xi)$ is linear, we assume that its singular values are uniformly bounded in $\xi$, that is 
\begin{equation}
\label{equ:fom infsup constants}
    0 < c \leq \min_{v\in U} \frac{\| B(\xi) v \|_{U'} }{\|v\|_U}
    \leq \max_{v \in U} \frac{\| B(\xi) v \|_{U'} }{\|v\|_U}
    \leq C < \infty,
\end{equation}
for all $\xi \in \mathcal{P}$, for some constants $c$ and $C$. This implies that 
 for all $v \in U$,
\begin{subequations}
\label{equ:residual distance bounds}
\begin{equation}
    c \| v - u(\xi) \|_U 
    \leq \mathcal{R}(v,\xi) 
     \leq  C  \|v - u(\xi)\|_U, 
    \quad  \forall \xi\in\mathcal{P}, 
    \end{equation}
    and
 \begin{equation}   
    c~ \text{dist}(v,\mathcal{M}) 
    \leq  \mathcal{S}(v,\mathcal{M})
    \leq  C~  \text{dist}(v,\mathcal{M}).
\end{equation}
\end{subequations}
Moreover, under additional assumptions on $B$ and $f$, it is possible to compute $\mathcal{R}(v,\cdot)$ in an online efficient way, i.e. with costs independent on $\mathcal{N}$, by taking advantage of the fact that $v$ lies in a low-dimensional subspace. 
Furthermore, computing $\mathcal{S}$ does not require us to know any element in $\mathcal{M}$, whereas the true distance does. 
We provide a more detailed discussion on the computational aspects in \Cref{subsec:offline-online decomposition}. 
\begin{remark}
    In \cite{cohenNonlinearReducedModels2022} the authors proposed another residual-based nonlinear recovery map, which is
    \[
       A(w)  = \argmin_{v\in w + W^{\perp}} \min_{\xi\in\mathcal{P}} \mathcal{R}(v,\xi),
    \]
    which yields an error $\Vert A(w) - u \Vert_U \le \delta_0$ in our noiseless setting. 
    In some cases, as when $B(\xi)$ is linear and $\xi \mapsto B(\xi)$ and $ \xi \mapsto f(\xi)$ are affine functions of $\xi$, the function $(v,\xi)\mapsto \mathcal{R}(v,\xi)^2$ is convex and quadratic in each component, hence the previous problem can be tackled with an alternating minimization procedure. 
    Numerical experiments from \cite{cohenNonlinearReducedModels2022} have shown very good results for large enough number of measurements $m$. 
    However, it requires solving online about $m$ high dimensional problems, which is prohibitive for large $\mathcal{N}$.
\end{remark}
\begin{remark}
The properties \eqref{equ:residual distance bounds} also hold in the case of a  Lipschitz continuous and strongly monotone nonlinear operator $B(\xi)$, where $c$ is a uniform lower bound of the strong monotonicity constant and $C$ is a uniform bound of the Lipschitz constant. 
However, in a nonlinear setting, additional efforts are required for obtaining efficient online and offline computations. 
\end{remark}

\begin{remark}
    A preconditioned residual could be used to obtain a better condition number $\kappa$. 
    Using preconditioners to better estimate the $U$-norm in a model order reduction setting  was proposed in \cite{zahmInterpolationInverseOperators2016, balabanovPreconditionersModelOrder2021}.
\end{remark}

\section{Randomized multi-space approach}
\label{sec:randomized multi-space problem}

First in \Cref{subsec:randomized linear algebra} we recall some required preliminaries on randomized linear algebra. 
Then in \Cref{subsec:randomized selection criterion}, we propose to use a sketched (or randomized) version of the selection criterion from \Cref{subsec:parametric pdes} in the framework of parameter-dependent operator equations. 
We show that, with a rather small sketch and high probability, an oracle bound similar to the non-sketched approach holds.

\subsection{Randomized linear algebra}
\label{subsec:randomized linear algebra}

A basic task of RLA is to approximate norms and inner products of high dimensional vectors, by 
operating on lower dimensional vectors obtained through a \emph{random embedding} or \emph{random sketching}.  
Such embedding does not only reduce the computational time for evaluating norms and inner products, but also reduce the memory consumption since only the embedded vectors need to be stored. 

We first recall the notion of $U \rightarrow \ell_2$ embedding for a subspace $Y \subset U$, which is a generalization from \cite{balabanovRandomizedLinearAlgebra2019} of the notion of $\ell_2$ embedding for a subspace introduced in \cite{woodruffComputationalAdvertisingTechniques2014}. 
Let $\mathbf{\Theta} \in \mathbb{R}^{k \times \mathcal{\mathcal{N}}}$ be a random matrix, where $k$ is the embedding dimension, that represents a linear map $\Theta$ from $(U,\langle\cdot,\cdot\rangle_U)$ to $(\mathbb{R}^{k},\langle\cdot,\cdot\rangle)$. 
This map allows to define (sketched) semi-inner products
\begin{equation}
\label{equ:rla semi inner products}
    \langle \cdot, \cdot \rangle_U^{\Theta} = 
    \langle \mathbf{\Theta} \cdot, \mathbf{\Theta} \cdot \rangle
    \hspace{5mm}\text{and}\hspace{5mm}
    \langle \cdot, \cdot \rangle_{U'}^{\Theta} = 
    \langle \mathbf{\Theta} \metricmat^{-1} \cdot, \mathbf{\Theta} \metricmat^{-1} \cdot \rangle, 
\end{equation}
and associated   semi-norms $\|\cdot\|_U^{\Theta}$ and $\|\cdot\|_{U'}^{\Theta}$ respectively. 
Recall that $\metricmat$ is the inner product matrix in $U$ such that $\innerp{\cdot}{\cdot}_U = \innerp{\cdot}{\metricmat\cdot}$.
We now define formally the notion of subspace embedding. 
\begin{definition}
\label{def:subspace embedding def}
$\Theta$ is called a $U\rightarrow \ell_2$ $\epsilon$-subspace embedding for $Y$, with  $\epsilon\in [0,1)$, if 
\begin{equation}
\label{equ:subspace embedding def}
 \Big\vert \|v\|_U ^2 - \|v\|_U^{\Theta~2} \Big\vert 
 \leq \epsilon \|v\|_U ^2, \quad \forall v \in Y.
\end{equation}
\end{definition}
It can be shown that if $\Theta$ is a $U\rightarrow \ell_2$ $\epsilon$-subspace embedding for $Y$ then $\Theta R_U^{-1}$ is a $U'\rightarrow \ell_2$ $\epsilon$-subspace embedding for $Y'$. 
Hence, both $\| \cdot \|_U$ and $\|\cdot\|_{U'}$ are well approximated by their sketched versions. 

This definition is a property of $\Theta$ for a specific subspace $Y$. 
We are interested in the probability of failure for the random embedding $\Theta$ to be a $U\rightarrow \ell_2$ $\epsilon$-subspace embedding for any $d$-dimensional subspace $Y$ of $U$. With this in mind, the notion of oblivious embedding is defined.
\begin{definition}
\label{def:oublivous embedding def}
$\Theta$ is called a $(\epsilon, \delta, d)$ oblivious $U\rightarrow \ell_2$ subspace embedding if for any $d$-dimensional subspace $Y$ of $U$,
\begin{equation}
\label{equ:oublivous embedding def}
    \mathbb{P}(\Theta \text{ is a } U \rightarrow \ell_2 ~\epsilon\text{-subspace embedding for } Y) 
    \geq 1-\delta.
\end{equation}
\end{definition}

Depending on the type of embedding considered, a priori conditions on $k$ can be obtained to ensure $\Theta$ to be a $(\epsilon, \delta, d)$ oblivious $U \rightarrow \ell_2$ subspace embedding. 
For example, for a Gaussian embedding $\mathbf{\Omega}$ 
(i.e. a matrix whose entries are drawn i.i.d. from a Gaussian distribution with mean $0$ and variance $k^{-1}$), it has been shown in \cite{balabanovRandomizedLinearAlgebra2019} that if $0<\epsilon<0.572$ and 
\begin{equation}
\label{equ:bound gaussian embedding size}
    k \geq 7.87 \epsilon^{-2} (6.9d + \log(\delta^{-1}))
\end{equation} 
then $\mathbf{\Omega}$ is a $(\epsilon, \delta, d)$ oblivious $\ell_2 \rightarrow \ell_2$ subspace embedding. 
Then, by considering $\mathbf{\Theta} = \mathbf{\Omega} \mathbf{Q}$ with $\mathbf{Q}\in\mathbb{R}^{s\times \mathcal{N}}$ such that $\mathbf{Q}\hermtrans \mathbf{Q} = \metricmat$, it follows that $\mathbf{\Theta}$ is a $(\epsilon, \delta, d)$ $U \rightarrow \ell_2$ subspace embedding. 
Note that \eqref{equ:bound gaussian embedding size} is a sufficient condition, which may be very conservative. 
Indeed, numerical experiments from \cite{balabanovRandomizedLinearAlgebra2019, balabanovRandomizedLinearAlgebra2021} showed that random sketching may perform well for much smaller dimension $k$.
It takes $\mathcal{O}(k\mathcal{N})$ operations to sketch a vector with a Gaussian embedding, which can be prohibitive.
A solution is to consider structured embeddings allowing to sketch vectors with only $\mathcal{O}(\mathcal{N}\log(\mathcal{N}))$ operations. 
It is for example the case for the partial subsampled randomized Hadamard transform (P-SRHT) described in \cite{troppImprovedAnalysisSubsampled2011}. 
Sufficient conditions on $k$ are also available but they are very conservative. However, numerical experiments from \cite{balabanovRandomizedLinearAlgebra2019, balabanovRandomizedLinearAlgebra2021} showed performances similar to the Gaussian embedding for a given dimension $k$.
A good practice is to use composed embeddings, for example $\mathbf{\Theta}=\mathbf{\Theta}_2 \mathbf{\Theta}_1$ with $\mathbf{\Theta}_1$ a moderate sized P-SRHT embedding and $\mathbf{\Theta}_2$ a small sized Gaussian embedding.

\begin{remark}
\label{rema:implicite semi inner product matrix}
    In practice, the matrix $\mathbf{Q}$ can be obtained via (sparse) Cholesky factorization of $\metricmat$, but it can also be any rectangular matrix such that $\mathbf{Q}\hermtrans \mathbf{Q} = \metricmat$, as pointed out in \cite[Remark 2.7]{balabanovRandomizedLinearAlgebra2019}.
    Such matrix may be obtained by Cholesky factorizations of small matrices, which are easy to compute. 
    This is especially important for large scale problems.
\end{remark}

\subsection{Randomized selection criterion for parameter-dependent operator equations}
\label{subsec:randomized selection criterion}

Within the framework of \Cref{subsec:parametric pdes}, we propose to replace the function $\mathcal{S}$ by a surrogate quantity  $\mathcal{S}^{\Theta}$, which is a sketched version of $\mathcal{S}$ defined for all $v\in U$ by
\begin{subequations}
\label{equ:definition sketched S2 and sketched residual norm}
\begin{equation}
    \mathcal{S}^{\Theta} (v, \mathcal{P}) 
    := \min_{\xi \in \mathcal{P}} \mathcal{R}^{\Theta}(v, \xi), 
\end{equation}
with 
\begin{equation}
    \mathcal{R}^{\Theta}(v , \xi) 
    := \big\| r(v,\xi) \big\|_{U'}^{\Theta} = \big\|B(\xi)v - f(\xi) \big\|_{U'}^{\Theta}, 
    \quad \forall \xi\in\mathcal{P}.
\end{equation}
\end{subequations}
Let us now discuss how bounds similar to \eqref{equ:residual distance bounds} can be obtained. 
In a general setting where no particular structure is assumed for $B$ or $f$,  $\mathcal{S}^{\Theta}$ can be approximated on a finite (possible very large) set $\mathcal{\widetilde P} \subset \mathcal{P}$. 
\begin{proposition}
\label{prop:sketched distance bounded by distance when finite manifold}
    Assume that $\#\mathcal{\widetilde P}<\infty$. If $\Theta$ is a $(\epsilon, \#\mathcal{\widetilde P}^{-1} \delta, 1)$ oblivious $U\rightarrow \ell_2$ subspace embedding, then for any $v\in U$, with probability at least $1-\delta$ we have
    \begin{equation}
    \label{equ:sketched distance bounded by distance when finite manifold}
        \sqrt{1-\epsilon} ~\mathcal{S}(v, \mathcal{\widetilde P}) 
        \leq \mathcal{S}^{\Theta}(v, \mathcal{\widetilde P}) 
        \leq \sqrt{1+\epsilon} ~\mathcal{S}(v, \mathcal{\widetilde P}).
    \end{equation}
\end{proposition}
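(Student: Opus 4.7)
The plan is to reduce the proposition to a union bound over the finitely many parameters in $\widetilde{\mathcal{P}}$, after observing that each residual $r(v,\xi)$ generates a one-dimensional subspace of $U'$ to which the subspace embedding property can be applied.

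First, I would fix $v \in U$ and, for each $\xi \in \widetilde{\mathcal{P}}$, identify the one-dimensional subspace $Y_\xi' := \mathrm{span}\{r(v,\xi)\} \subset U'$ (assuming $r(v,\xi)\neq 0$; otherwise the bound is trivial for that $\xi$). Applying the obliviousness hypothesis with $d=1$ to the subspace $Y_\xi := R_U^{-1}(Y_\xi') \subset U$, the map $\Theta$ is a $U \to \ell_2$ $\epsilon$-subspace embedding for $Y_\xi$ with probability at least $1 - \#\widetilde{\mathcal{P}}^{-1}\delta$. By the duality property recalled just after \Cref{def:subspace embedding def}, this implies that $\Theta R_U^{-1}$ is a $U' \to \ell_2$ $\epsilon$-subspace embedding for $Y_\xi'$, and in particular
\[
\bigl|\,\|r(v,\xi)\|_{U'}^{2} - \|r(v,\xi)\|_{U'}^{\Theta\,2}\,\bigr| \leq \epsilon\,\|r(v,\xi)\|_{U'}^{2}.
\]

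Next, I would apply a union bound over the $\#\widetilde{\mathcal{P}}$ events. With probability at least $1 - \#\widetilde{\mathcal{P}} \cdot \#\widetilde{\mathcal{P}}^{-1}\delta = 1 - \delta$, the above two-sided inequality holds simultaneously for every $\xi \in \widetilde{\mathcal{P}}$. On this event, taking square roots yields
\[
\sqrt{1-\epsilon}\,\mathcal{R}(v,\xi) \;\leq\; \mathcal{R}^{\Theta}(v,\xi) \;\leq\; \sqrt{1+\epsilon}\,\mathcal{R}(v,\xi), \qquad \forall \xi \in \widetilde{\mathcal{P}}.
\]

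Finally, I would take the minimum over $\xi \in \widetilde{\mathcal{P}}$ on all sides. Since the inequalities hold uniformly in $\xi$ with the same multiplicative constants, the minimum of $\mathcal{R}^\Theta(v,\cdot)$ is sandwiched between $\sqrt{1-\epsilon}$ and $\sqrt{1+\epsilon}$ times $\min_\xi \mathcal{R}(v,\xi)$, which by definition \eqref{equ:S2 and residual norm definition} and \eqref{equ:definition sketched S2 and sketched residual norm} gives exactly \eqref{equ:sketched distance bounded by distance when finite manifold}. There is no real obstacle here: the only subtle point is making sure the obliviousness is invoked with the correct failure probability $\#\widetilde{\mathcal{P}}^{-1}\delta$ so that the union bound gives total failure probability at most $\delta$, and that the primal $U$-embedding statement is transferred to a dual $U'$-embedding statement via the Riesz isometry so that it applies to the residual vectors living in $U'$.
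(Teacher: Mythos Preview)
Your proof is correct and follows essentially the same approach as the paper: identify the one-dimensional subspaces $\mathrm{span}\{r(v,\xi)\}$, apply the oblivious embedding hypothesis with failure probability $\#\widetilde{\mathcal{P}}^{-1}\delta$ to each, and conclude by a union bound over $\widetilde{\mathcal{P}}$. Your version is in fact more careful than the paper's two-sentence proof, since you explicitly handle the $U$ versus $U'$ duality via the Riesz map and spell out the passage from the pointwise bounds on $\mathcal{R}^\Theta$ to the bound on $\mathcal{S}^\Theta$ by taking the minimum.
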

\begin{proof}
    For any $v\in U$ and $\xi\in\mathcal{\widetilde P}$, span$\{r(v,\xi)\}$ is a $1$-dimensional space, thus $\Theta$ is a subspace embedding for this space with probability greater than $1 - \# \mathcal{\widetilde P}^{-1} \delta$. 
    Considering a union bound for the probability of failure for each element in $\mathcal{\widetilde P}$ gives the expected result.
\end{proof}
Computational efficiency can be obtained if we assume that $B(\xi)$ is linear and that $B(\xi)$ and $f(\xi)$ admit  affine representations
\begin{equation}
\label{equ:fom affine representation}
    B(\xi) = B^{(0)} + \sum_{q=1}^{m_B} \theta_q^{B}(\xi) B^{(q)},
    \quad 
    f(\xi) = f^{(0)} + \sum_{q=1}^{m_{f}} \theta_q^{f}(\xi) f^{(q)}.
\end{equation}
where $\theta^B_q : \mathcal{P} \rightarrow \mathbb{R}$ and $\theta^{f}_q : \mathcal{P} \rightarrow \mathbb{R}$ are called the affine coefficients, and where $B^{(q)} : U \rightarrow U'$ and $f^{(q)}\in U'$ are parameter-independent and called the affine terms. 
Such representations allow us to do precomputations (offline) independently of $\xi$ using the affine terms, and then to rapidly evaluate (online) the expansion of a parameter dependent quantity. 
This leads to online-efficient computation of the residual norm. 
Those   affine representations may be naturally obtained from the initial problem, such as in the numerical examples in \Cref{sec:numerical examples}, or may be obtained by using for example the empirical interpolation method (EIM) presented in \cite{madayGeneralMultipurposeInterpolation2009}. 

\begin{proposition}
\label{prop:surrogate distance equivalence affine representation}
Assume the affine representations of $B$ and $f$ from \eqref{equ:fom affine representation}. If $\Theta$ is a $(\epsilon, \delta, m_B + m_{f}+1)$ oblivious $U \rightarrow \ell_2$ subspace embedding, then for any $v \in U$, with probability at least $1-\delta$ we have
\begin{equation}
\label{equ:sketched res dist bounded by res dist}
    \sqrt{1-\epsilon} ~\mathcal{S}(v, \mathcal{P}) 
    \leq \mathcal{S}^{\Theta}(v, \mathcal{P}) 
    \leq \sqrt{1+\epsilon} ~\mathcal{S}(v, \mathcal{P}).
\end{equation}
\end{proposition}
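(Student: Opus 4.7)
The plan is to observe that, by the affine decomposition \eqref{equ:fom affine representation}, for every fixed $v \in U$ the residual $r(v,\xi)$ lies in a common low-dimensional subspace of $U'$ as $\xi$ varies in $\mathcal{P}$, and then to apply the oblivious embedding property to that single subspace.

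First, I would expand
\begin{equation*}
r(v,\xi) = \bigl(B^{(0)}v - f^{(0)}\bigr) + \sum_{q=1}^{m_B} \theta_q^B(\xi)\, B^{(q)}v \;-\; \sum_{q=1}^{m_{f}} \theta_q^{f}(\xi)\, f^{(q)},
\end{equation*}
so that $r(v,\xi) \in Y_v'$ for every $\xi \in \mathcal{P}$, where
\begin{equation*}
Y_v' := \mathrm{span}\bigl\{\, B^{(0)}v - f^{(0)},\; B^{(1)}v,\ldots,B^{(m_B)}v,\; f^{(1)},\ldots,f^{(m_{f})} \,\bigr\} \subset U'
\end{equation*}
has dimension at most $m_B + m_{f} + 1$, independently of $\xi$. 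The merging of the two constant terms $B^{(0)}v$ and $f^{(0)}$ into a single generator is what produces the exact dimension stated in the proposition.

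Second, since the Riesz map $R_U$ is an isometry between $U$ and $U'$, the preimage $Y_v := R_U^{-1}(Y_v') \subset U$ has the same dimension, at most $m_B+m_{f}+1$. By the oblivious embedding hypothesis applied to $Y_v$, there is an event of probability at least $1-\delta$ on which $\Theta$ is a $U \to \ell_2$ $\epsilon$-subspace embedding for $Y_v$. Using the fact recalled just after \Cref{def:subspace embedding def} that such a property transfers to $\Theta R_U^{-1}$ being a $U' \to \ell_2$ $\epsilon$-subspace embedding for $Y_v'$, on this event we have
\begin{equation*}
\bigl|\,\|r\|_{U'}^2 - (\|r\|_{U'}^{\Theta})^2\,\bigr| \leq \epsilon\, \|r\|_{U'}^2, \qquad \forall r \in Y_v',
\end{equation*}
or equivalently $\sqrt{1-\epsilon}\,\|r\|_{U'} \leq \|r\|_{U'}^\Theta \leq \sqrt{1+\epsilon}\,\|r\|_{U'}$ for every $r \in Y_v'$.

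Finally, specializing $r = r(v,\xi)$ gives, on the same event, $\sqrt{1-\epsilon}\,\mathcal{R}(v,\xi) \leq \mathcal{R}^\Theta(v,\xi) \leq \sqrt{1+\epsilon}\,\mathcal{R}(v,\xi)$ for all $\xi \in \mathcal{P}$; since the scalars $\sqrt{1\pm\epsilon}$ are nonnegative and $\xi$-independent, taking the infimum over $\xi$ in this chain yields \eqref{equ:sketched res dist bounded by res dist}. The only non-trivial ingredient is identifying the subspace $Y_v'$ of dimension $m_B+m_{f}+1$; everything else (the $U \leftrightarrow U'$ transfer and the minimization in $\xi$) is routine, so no real obstacle is expected.
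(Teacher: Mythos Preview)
Your proof is correct and follows essentially the same approach as the paper: both identify, for fixed $v$, the low-dimensional subspace of $U'$ (spanned by $B^{(0)}v - f^{(0)}$, the $B^{(q)}v$, and the $f^{(q)}$) that contains $r(v,\xi)$ for all $\xi$, apply the oblivious embedding property once to this subspace, and then take the infimum over $\xi$. Your treatment of the $U \leftrightarrow U'$ transfer via the Riesz map is slightly more explicit than the paper's, but otherwise the arguments coincide.
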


\begin{proof}
    For any $v \in U$ and $\xi\in\mathcal{P}$, the residual can be written in a form with separated variables,
    \begin{equation*}
        r(v, \xi) = 
        G(v) \theta(\xi) - g(v) ,
    \end{equation*}
    with
    \begin{equation}
    \label{equ:separated variable formulation}
    \begin{gathered}
        G(v) := \Big( 
        B^{(1)} v \mid \cdots \mid B^{(m_B)} v \mid
        -b^{(1)} \mid \cdots \mid -b^{(m_f)}
        \Big) ,
        \\
        \theta(\xi) = \Big( 
        \theta^B_1(\xi) \mid \cdots \mid \theta^B_{m_B}(\xi) \mid
        \theta^f_1(\xi) \mid \cdots \mid \theta^f_{m_f}(\xi)
        \Big)^T,
        \\
        g(v) := f^{(0)} - B^{(0)} v.
    \end{gathered}
    \end{equation}
    In other words, for a given $v\in U$, $r(v, \xi)$ lies in a low dimensional subspace, spanned by $g(v)$ and the columns of $G(v)$, independently from $\xi$. 
    This subspace is of dimension at most $m_B+m_f+1$. 
    Hence, if $\Theta$ is a $(\epsilon, \delta, m_B + m_f+1)$ oblivious $U \rightarrow \ell_2$ subspace embedding, then for a fixed $v\in U$, with probability at least $1-\delta$,
    \begin{equation*}
        \sqrt{1-\epsilon}~ \mathcal{R}(v, \xi)
        \leq \mathcal{R}^{\Theta}(v, \xi) 
        \leq \sqrt{1+\epsilon}~ \mathcal{R}(v, \xi),
        \hspace{5mm} \forall \xi \in \mathcal{P}.
    \end{equation*}
    Taking the infimum  over $\mathcal{P}$ yields the desired result. 
    \end{proof}

Now that we can ensure a control of $\mathcal{S}^{\Theta}$ by $\mathcal{S}$ with high probability, we can ensure the following error bounds  by using the same reasoning as for obtaining \Cref{prop:general oracle bound}. 
\begin{corollary}
\label{coro:dic-based multi-space sketched error bound 2}
Assume that $\mu(\mathcal{M}, W) < \infty$. If the assumptions of either \Cref{prop:sketched distance bounded by distance when finite manifold} or \Cref{prop:surrogate distance equivalence affine representation} are satisfied, then for any $u \in \mathcal{M}$, with probability at least $1-N \delta,$ we have
\begin{equation}
\label{equ:dic-based multi-space sketched error bound 2}
    \| u - A^{\textup{mult}}_{\mathcal{S}^{\Theta}}(w) \|_U 
    \leq \delta_0 + 2 \kappa^{\Theta} \mu(\mathcal{M}, W) 
    \min_{1\leq k \leq N} \|u - A_{k}(w) \|_U,
\end{equation}
with $\kappa^{\Theta} := \kappa \sqrt{1+\epsilon} / \sqrt{1-\epsilon}$.
\end{corollary}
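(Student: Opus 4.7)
The plan is to mimic the proof of \Cref{prop:general oracle bound} essentially verbatim, with the only new ingredient being a uniform (over the library) transfer of the surrogate-to-distance equivalence from the deterministic surrogate $\mathcal{S}$ to its sketched counterpart $\mathcal{S}^\Theta$. Since the argument establishing \eqref{equ:general oracle bound} only uses property \eqref{equ:distance controled by surrogate} of the surrogate, the strategy is to show that $\mathcal{S}^\Theta$ obeys an analogous two-sided bound with modified constants, with high probability, simultaneously across all $N$ candidate recoveries $A_k(w)$.

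First I would apply either \Cref{prop:sketched distance bounded by distance when finite manifold} or \Cref{prop:surrogate distance equivalence affine representation} to each individual vector $v=A_k(w)$, $1\leq k\leq N$, each giving \eqref{equ:sketched distance bounded by distance when finite manifold} (or \eqref{equ:sketched res dist bounded by res dist}) with failure probability at most $\delta$. A union bound over the $N$ candidate spaces then yields, with probability at least $1-N\delta$, the uniform two-sided bound
\[
    \sqrt{1-\epsilon}\,\mathcal{S}(A_k(w),\mathcal{M}) \leq \mathcal{S}^\Theta(A_k(w),\mathcal{M}) \leq \sqrt{1+\epsilon}\,\mathcal{S}(A_k(w),\mathcal{M}), \quad 1\le k\le N.
\]
Chaining this with \eqref{equ:residual distance bounds} (which provides $c\,\text{dist}(v,\mathcal{M})\leq \mathcal{S}(v,\mathcal{M})\leq C\,\text{dist}(v,\mathcal{M})$) immediately gives that $\mathcal{S}^\Theta$ satisfies an inequality of the form \eqref{equ:distance controled by surrogate} with constants $c\sqrt{1-\epsilon}$ and $C\sqrt{1+\epsilon}$ at every point $A_k(w)$. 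The effective condition number is therefore $\kappa^\Theta = \kappa\sqrt{1+\epsilon}/\sqrt{1-\epsilon}$.

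With this in hand, I would re-run the proof of \Cref{prop:general oracle bound}: define $k_{\text{best}}$ as the minimizer of $\|u-A_k(w)\|_U$, use the definition of $k^*$ as the minimizer of $\mathcal{S}^\Theta(A_k(w),\mathcal{M})$ together with the uniform inequality above to deduce $\text{dist}(A_{k^*}(w),\mathcal{M})\leq \kappa^\Theta\lambda$ with $\lambda:=\text{dist}(A_{k_{\text{best}}}(w),\mathcal{M})$, then invoke the definition of $\mu(\mathcal{M},W)$ through $\delta_{\kappa^\Theta\lambda}\leq \delta_0+2\kappa^\Theta\lambda\,\mu(\mathcal{M},W)$ together with $\lambda\leq\|u-A_{k_{\text{best}}}(w)\|_U$ (since $u\in\mathcal{M}$). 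The case $\lambda=0$ is handled separately as in the proof of \Cref{prop:general oracle bound}, yielding the bound $\delta_0$.

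The main (and essentially only) subtlety is the bookkeeping of the probability: the $N\delta$ factor arises because the sketching properties of Propositions \ref{prop:sketched distance bounded by distance when finite manifold} and \ref{prop:surrogate distance equivalence affine representation} are stated for a \emph{fixed} vector $v$, whereas here we need them to hold simultaneously at the $N$ data-dependent vectors $A_k(w)$; the union bound is legitimate because, for each $k$, the relevant subspace (either $\text{span}\{r(A_k(w),\xi)\}$ for $\xi\in\widetilde{\mathcal{P}}$ or the $(m_B+m_f+1)$-dimensional residual subspace associated with $A_k(w)$) is chosen independently of the random draw of $\Theta$. Once this is acknowledged, no further work beyond transcribing the deterministic oracle argument is required.
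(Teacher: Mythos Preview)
Your proposal is correct and matches the paper's approach exactly: the paper simply states that the corollary follows ``by using the same reasoning as for obtaining \Cref{prop:general oracle bound}'' once the control of $\mathcal{S}^{\Theta}$ by $\mathcal{S}$ from \Cref{prop:sketched distance bounded by distance when finite manifold} or \Cref{prop:surrogate distance equivalence affine representation} is in hand, and your write-up spells this out, including the union bound over the $N$ candidate recoveries that accounts for the $1-N\delta$ probability.
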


The key point of this random sketching approach is that we can use a sketch of rather low size while ensuring a desired precision with  high probability. 
Indeed, in view of \eqref{equ:bound gaussian embedding size}, we can satisfy the assumptions of respectively \Cref{prop:sketched distance bounded by distance when finite manifold} or \Cref{prop:surrogate distance equivalence affine representation}
with a sketch of size
\begin{equation*}
    k=\mathcal{O}\Big(\epsilon^{-2} \log(\#\mathcal{\widetilde P}^{-1}\delta^{-1})\Big)
    \hspace{5mm} \text{or} \hspace{5mm}
    k=\mathcal{O}\Big(\epsilon^{-2} \big( (m_B + m_f) + \log(\delta^{-1}) \big)\Big).
\end{equation*}
This will allow us to manipulate only some low dimensional quantities during the online stage, while requiring reasonable offline costs and ensuring robustness to round-off errors. 
This is discussed in more details in \Cref{subsec:offline-online decomposition}.

We end this section by citing \cite{nicholsCoarseReducedModel2022} in which the authors proposed another way of approximating $\mathcal{S}$ using a coarse mesh.
This quantity can be computed efficiently and their numerical experiments show good performances.
However, the authors do not provide a clear way of choosing the size of the coarse mesh in order to satisfy a bound similar to \eqref{equ:distance controled by surrogate}.
On the other hand, we provide explicit bounds on the sketch size $k$ in order to satisfy such a bound with high probability, as stated in \Cref{prop:sketched distance bounded by distance when finite manifold,prop:surrogate distance equivalence affine representation}.
Still, their approach have the advantage of being deterministic and more in line with the use of mesh-based numerical simulations.

\section{Dictionary approach for inverse problem}
\label{sec:dictionary approach for inverse problem}

It has been stated in \cite{balabanovRandomizedLinearAlgebra2021} that the size of the library required for piecewise linear (or affine) reduced modeling may become too large, resulting in prohibitive computational costs. 
The authors then proposed a new dictionary-based approach for forward problems, where low dimensional approximation spaces are obtained from a sparse selection of vectors in a dictionary. 
Most importantly, they made their approach computationally efficient by using randomized linear algebra (RLA) techniques.
We propose to use a similar approach for inverse problems.

First in \Cref{subsec:dictionary-based approximation} we describe the general framework of dictionary-based approximation.
Then in \Cref{subsec:compressive sensing} we summarize a few well known results and algorithms from the compressive sensing literature.
We then propose in \Cref{subsec:dictionary based multispace} to use those and a selection criterion as described in \Cref{subsec:general approach} in order to obtain a non-parametric and non-linear recovery map.
Then in \Cref{subsec:offline-online decomposition} we show that using the randomized criterion we proposed in \Cref{subsec:randomized selection criterion} leads to an efficient offline-online decomposition in the framework of parameter-dependent operator equations (or PDEs) with
affine parametrization.
Finally in \Cref{subsec:limitations} we discuss on the main limitations of the approach we proposed.

\subsection{Dictionary-based approximation}
\label{subsec:dictionary-based approximation}

In this section, following the idea from \cite{balabanovRandomizedLinearAlgebra2021}, we consider a dictionary $\mathcal{D}_K$ containing $K$ normalized vectors in $U$,
\begin{equation}
    \mathcal{D}_K := \{v^{(k)}: 1\leq k\leq K\} 
    \hspace{5mm} \text{with} \hspace{5mm} \|v^{(k)}\|_U = 1, \hspace{5mm} 1\leq k\leq K,
\end{equation}
We then consider the library $\mathcal{L}_n(\mathcal{D}_K)$ defined by 
\begin{equation}
    \mathcal{L}_n(\mathcal{D}_K) := \Big\{ \sum_{k=1}^K x_k v^{(k)} 
    : ~(x_k)_{k=1}^K \in\mathbb{R}^{K},~ \|x\|_0 \leq n \Big\},
\end{equation}
which contains all subspaces spanned by at most $n$ vectors of $\mathcal{D}_K$.
If $\mathcal{D}_K$ is built independently of $\mathcal{M}$, for example a Fourier of wavelet basis, then it corresponds to the classical framework of best $n$-term approximation.
In our context of model order reduction, we can build $\mathcal{D}_K$ so that it is well fitted to $\mathcal{M}$.

The benchmark error for the approximation of $\mathcal{M}$ is then the dictionary-based $n$-width proposed in \cite{balabanovRandomizedLinearAlgebra2021} defined as
\begin{equation}
    \sigma_n(\mathcal{M}, K) := \inf_{\# \mathcal{D}=K}
    \sup_{u\in \mathcal{M}}
    \min_{V \in \mathcal{L}_n(\mathcal{D})}
    \min_{v\in V}\|u- v\|_U.
\end{equation}
The use of the dictionary-based approximation presents two main advantages over methods based on a partitioning of $\mathcal{M}$, such as the piecewise affine approach from \Cref{subsec:piecewise multi-space}. 

Firstly, dictionary-based approximation has a better approximation power than piecewise affine approximation presented in \Cref{subsec:piecewise multi-space}.
Indeed, one can easily see that
\[
\sigma_n(\cdot, nM) \leq d_n(\cdot, M),
\]
while $\mathcal{D}_{nM}$ and $\mathcal{L}_n^M$ both require the storage of $nM$ vectors in $U$.
On the other hand, there exist sets $\mathcal{M}$ such that the piecewise affine approximation requires $\binom{K}{n}$ subspaces to reach the error of the dictionary-based approximation.
For example, one can take $\mathcal{M}$ as the unit ball of $\mathcal{L}_n(\mathcal{D}_K)$, which is compact, and notice that 
\[
0 = d_n \bigg(\mathcal{M}, \binom{K}{n}\bigg) 
\leq 
\sigma_n(\mathcal{M}, K) 
<
d_n\bigg(\mathcal{M}, \binom{K}{n}-1\bigg).
\]

Secondly, \cite{balabanovRandomizedLinearAlgebra2021} shows that $\sigma_n(\cdot, K)$ better preserves the rate of decay with $n$ than $d_n(\cdot, N)_U$ in the case of a superposition of solutions.
It is for example the case for PDEs with multiple transport phenomena. 
More precisely, if $\mathcal{M} \subset \sum_{i=1}^l \mathcal{M}_i$, and if $K=\mathcal{O}(n^{\nu})$ leads to polynomial or exponential decay with $n$ of $\sigma_n(\mathcal{M}_i, K)$, then \cite[Corollary 4.2]{balabanovRandomizedLinearAlgebra2021}  states that $\sigma_n(\mathcal{M}, K)$ has the same type of decay.
Here $l>0$ is some integer and $\nu > 1$ is some small real number.
On the other hand, to obtain similar results with $d_n(\mathcal{M}, N)_U$ defined in \eqref{equ:non linear kolmogorov n width}, one may need a number of subspaces $N \geq n^{l\nu}$.

Finally, we also emphasize the fact that
it does not require any particular parametrization of the initial parametric problem, whereas it can be a central issue for methods based on a partitioning of the parameter space.
However, it is important to note that this problem may also be circumvented by other approaches while remaining in the piecewise-affine approximation setting, as discussed at the end of \Cref{subsec:piecewise multi-space}.

\subsection{Compressive sensing}
\label{subsec:compressive sensing}

Dictionary-based approximation given a few linear measurements on the state can be seen as a compressive sensing problem.
Assuming that $\mathcal{M}$ can be well approximated by $\mathcal{L}_n(\mathcal{D}_K)$, there exists extensively studied algorithms which produce sparse approximations of vectors in $\mathcal{M}$.
For an overview on compressive sensing, we refer to \cite{foucartMathematicalIntroductionCompressive2013}.
Here we focus on the classical constrained $\ell_1$-norm minimization problem, 
\begin{equation}
\label{equ:l1 minimization exact fit}
    \min_{\mathbf{x}\in\mathbb{R}^K} \|\mathbf{x}\|_1
    \quad \text{subject to} \quad
    \mathbf{Cx = w},
\end{equation}
where $\mathbf{C}\in\mathbb{R}^{m\times K}$ is the dictionary-based analogous of the cross gramian matrix from \eqref{equ:pbdw ls formulation algebraic}, defined by $\mathbf{C := W\hermtrans \metricmat V}$ where $\mathbf{V}\in\mathbf{R}^{\mathcal{N}\times K}$ is the matrix whose columns represent the vectors in $\mathcal{D}_K$.
Note that $\mathbf{V}$ is not necessary orthonormal.

However, for denoising and stability purposes, a common approach is to consider instead a relaxed version of \eqref{equ:l1 minimization exact fit},
\begin{equation}
\label{equ:l1 minimization relaxed}
    \min_{\mathbf{x}\in\mathbb{R}^K} \|\mathbf{x}\|_1
    \quad \text{subject to} \quad
    \|\mathbf{Cx - w}\|_2 \leq \lambda,
\end{equation}
for some regularization parameter $\lambda > 0$.
This relaxed problem is closely related to the so called Basis Pursuit Denoising problem, defined by
\begin{equation}
\label{equ:basis pursuit denoising}
    \min_{\mathbf{x} \in \mathbb{R}^K}
    \frac{1}{2} \|\mathbf{C x - w}\|_2^2 + \alpha \|\mathbf{x}\|_1,
\end{equation}
for some regularization parameter $\alpha > 0$.
Indeed \cite[Proposition 3.2]{foucartMathematicalIntroductionCompressive2013} states that a minimizer of \eqref{equ:basis pursuit denoising} is also a minimizer of \eqref{equ:l1 minimization relaxed} for some $\lambda$, and if a minimizer of \eqref{equ:l1 minimization relaxed} is unique then it is also a minimizer of \eqref{equ:basis pursuit denoising} for some $\alpha$.

The problem is that, although we know from \cite[Lemma 14]{tibshiraniLassoProblemUniqueness2013} that there exists at least one $m$-sparse solutions to \eqref{equ:basis pursuit denoising}, if the latter is not unique then there might also exist solutions which are not $m$-sparse.
We thus need to assume that \eqref{equ:basis pursuit denoising} admits a unique solution for all $\alpha >0$, and we denote it by $\mathbf{x}^*_{\alpha}(w)$, which satisfies  $\| \mathbf{x}^*_{\alpha}(w) \|_0 \leq m$ and
\begin{equation}
\label{equ:basis pursuit denoising unique}
    \mathbf{x}^*_{\alpha}(w) 
    := \argmin_{\mathbf{x} \in \mathbb{R}^K}
    \frac{1}{2} \|\mathbf{C x - w}\|_2^2 
    + \alpha \|\mathbf{x}\|_1.
\end{equation}

In the PBDW framework, we shall not focus on $\mathbf{x}^*_{\alpha}(w)$ itself, but rather on its support $\text{supp}(\mathbf{x}^*_{\alpha}(w)) := 
\big\{i\in \{1, \hdots, K\} : \mathbf{x}^*_{\alpha}(w)_i \neq 0 \big\}$,
which allows us to define the reduced space 
$V_{\alpha}(w) \in \mathcal{L}_m(\mathcal{D}_K)$ and the recovery $w \mapsto A_{V_{\alpha}(w)}(w)$ where
\begin{equation}
    V_{\alpha}(w) := 
    \text{span} 
    \{ v^{(i)} : i\in \text{supp} (\mathbf{x}^*_{\alpha}(w)) \}.
\end{equation}
Note that one may also consider using directly the solution $\mathbf{x}^*_{\alpha}(w)$ as background term in \eqref{equ:pbdw ls formulation} and set 
$ v^* = \sum_{i=1}^K \mathbf{x}^*_{\alpha}(w)_i ~ v^{(i)}$.
It is also worth noting that $\alpha \mapsto \mathbf{x}_{\alpha}(w)$ is continuous piecewise-affine.

The problem of this compressive sensing approach is that it relies on a regularization parameter which has to be chosen carefully, using for example a cross validation approach.
In the next subsection, we propose a non-parametric approach by using the general multi-space approach from \Cref{subsec:general approach}, which can be seen as an online selection approach for a parameter $\alpha = \alpha(w)$, or equivalently the corresponding subspace.

\vspace{1em}
\begin{remark}
\label{rmk:uniqueness of lasso}
    There exist sufficient conditions on $\mathbf{C}$ to ensure that \eqref{equ:basis pursuit denoising} admits a unique solution.
    For example \cite[Lemma 4]{tibshiraniLassoProblemUniqueness2013} states that if its entries are drawn i.i.d. from a continuous probability law, then uniqueness is ensured with probability one.
    In our case, this could be ensured by considering a perturbed dictionary $\mathcal{\tilde D}_K$ containing the vectors 
    $\tilde v^{(i)} = v^{(i)} + \sum_{j=1}^m W^{(j)} \mathbf{e}_{i,j} $, where $\mathbf{e}\in\mathbb{R}^{K\times m}$ has entries drawn i.i.d. from a continuous probability law, and $(W^{(j)})_{j=1}^m$ is an orthonormal basis of $W$.
    The cross-gramian $\mathbf{\tilde C}$ associated is then $\mathbf{\tilde C = C + e}$ and satisfies the sufficient condition mentioned above.
    Although one could theoretically make the amplitude of $\mathbf{e}$ arbitrarily small, there is an important and challenging tradeoff to make.
    Indeed, the smaller the $\mathbf{e}$, the closer the approximation power of $\mathcal{\tilde D}_K$ is from the one of $\mathcal{D}_K$, but the more unstable $\alpha \mapsto \mathbf{x}_{\alpha}(w)$ is.
    This discussion needs a deeper investigation, but we will not go further in this work.
\end{remark}

\subsection{Dictionary-based multi-space}
\label{subsec:dictionary based multispace}

One interest in considering problem \eqref{equ:basis pursuit denoising} is stated in \cite[Theorem 15.2]{foucartMathematicalIntroductionCompressive2013}.
Indeed, 
since we assumed that \eqref{equ:basis pursuit denoising} admits a unique solution for all $\alpha>0$, then the LARS-homotopy algorithm \cite{efronLeastAngleRegression2004} efficiently provides solutions $\mathbf{x}^*_{\alpha}(w)$ for all $\alpha > 0$.
Hence, solving \eqref{equ:basis pursuit denoising unique} actually provides a library of spaces $\mathcal{L}(w) \subset \mathcal{L}_m(\mathcal{D}_K)$ defined by 
\begin{equation}
\label{equ:library with lars}
    \mathcal{L}(w) := \{ V_{\alpha}(w) : \alpha > 0 \}.
\end{equation}
Then we propose to use the selection criterion described in \Cref{subsec:general approach} to select a near-optimal subspace among $\mathcal{L}(w)$.
More precisely, for some surrogate distance to the manifold $\mathcal{S}$, we define the dictionary-based multi-space recovery $A^{\text{dic}}_{\mathcal{S}}$ as
\begin{equation}
\label{equ:dictionary based multispace}
    A^{\text{dic}}_{\mathcal{S}} (w) := A_{V_{\mathcal{S}}(w)}
    \quad \text{where} \quad
    V_{\mathcal{S}}(w) := 
    \argmin_{V \in \mathcal{L}(w)}
    \mathcal{S}(A_V(w), \mathcal{M}).
\end{equation}
Although this approach does not test every subspaces in $\mathcal{L}_m(\mathcal{D}_K)$, the results from \Cref{prop:general oracle bound} and \Cref{coro:dic-based multi-space sketched error bound 2} still hold, but with the smaller library $\mathcal{L}(w)$ generated by the LARS-homotopy algorithm. 
This leads us to the following error bound.
\smallskip
\begin{corollary}
\label{coro:LARS error bound}
Assume that $\mu(\mathcal{M}, W) < \infty$. Then for all $u\in\mathcal{M}$ and $w=P_W u$,
\begin{equation}
    \|u - A^{\text{dic}}_{\mathcal{S}}(w)\|_U 
    \leq \delta_0 + 2\kappa\mu(\mathcal{M}, W) 
    \min_{V \in \mathcal{L}(w)} \| u - A_{V}(w) \|_U.
\end{equation}
\end{corollary}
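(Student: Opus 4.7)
The plan is to observe that this corollary is essentially Proposition~\ref{prop:general oracle bound} applied to the particular (possibly $w$-dependent) library $\mathcal{L}(w)$, so the proof should be a nearly verbatim transcription of the proof already given for the general oracle bound. The only conceptual point to check is that nothing in that earlier argument relied on the library $\mathcal{L}_n^N$ being fixed independently of $w$ or being indexed by integers: the proof only uses the existence of a best element in the library for the fixed $u$, plus the definition of the selected space as the minimizer of $\mathcal{S}(A_V(w),\mathcal{M})$ over the library. Both properties are available here, since $\alpha\mapsto \mathbf{x}_\alpha^*(w)$ is piecewise affine with finitely many breakpoints, so $\mathcal{L}(w)$ is a finite set of subspaces and both argmins exist.

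Concretely, I would first introduce $V_{\text{best}} \in \argmin_{V \in \mathcal{L}(w)} \| u - A_V(w)\|_U$ and set $\lambda := \text{dist}(A_{V_{\text{best}}}(w),\mathcal{M})$. Then, using the surrogate-distance bound \eqref{equ:distance controled by surrogate} applied at $A_{V_{\text{best}}}(w)$ and $A_{V_{\mathcal{S}}(w)}(w)$, together with the minimality of $V_{\mathcal{S}}(w)$ in \eqref{equ:dictionary based multispace}, I obtain the chain
\[
\text{dist}(A_{V_{\mathcal{S}}(w)}(w),\mathcal{M}) \le \tfrac{1}{c}\mathcal{S}(A_{V_{\mathcal{S}}(w)}(w),\mathcal{M}) \le \tfrac{1}{c}\mathcal{S}(A_{V_{\text{best}}}(w),\mathcal{M}) \le \kappa\lambda.
\]

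Next I would observe that $A^{\text{dic}}_{\mathcal{S}}(w) = A_{V_{\mathcal{S}}(w)}(w) \in U_w$ by construction of the PBDW map in \eqref{equ:pbdw ls formulation}, and that $u \in \mathcal{M} \cap U_w \subset \mathcal{M}_{\kappa\lambda} \cap U_w$. Hence both $u$ and $A^{\text{dic}}_{\mathcal{S}}(w)$ lie in $\mathcal{M}_{\kappa\lambda}\cap U_w$, which yields
\[
\| u - A^{\text{dic}}_{\mathcal{S}}(w) \|_U \le \text{diam}(\mathcal{M}_{\kappa\lambda}\cap U_w) \le \delta_{\kappa\lambda}.
\]
Finally I would split into cases as in the proof of Proposition~\ref{prop:general oracle bound}: if $\lambda=0$ the bound reduces to $\delta_0$; if $\lambda>0$, the definition \eqref{equ:general stability constant} of $\mu(\mathcal{M},W)$ gives $\delta_{\kappa\lambda} \le \delta_0 + 2\kappa\lambda\,\mu(\mathcal{M},W)$, and then $\lambda \le \|u - A_{V_{\text{best}}}(w)\|_U$ (since $u\in\mathcal{M}$) closes the argument.

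There is no real obstacle here: the work was already done in Proposition~\ref{prop:general oracle bound}, and the only thing worth pointing out is that the $w$-dependence of $\mathcal{L}(w)$ is harmless because $w$ is fixed throughout the argument, and that finiteness of $\mathcal{L}(w)$ (guaranteed by the LARS-homotopy construction) legitimizes the argmins. I would therefore present the proof as a brief observation followed by the four short steps above, rather than as a fresh derivation.
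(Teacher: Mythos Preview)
Your proposal is correct and matches the paper's approach exactly: the paper does not give a separate proof but simply remarks that Proposition~\ref{prop:general oracle bound} applies verbatim with the library $\mathcal{L}(w)$ in place of $\mathcal{L}_n^N$, which is precisely what you do. Your additional remark on the finiteness of $\mathcal{L}(w)$ (via the piecewise-affine LARS path) to justify the argmins is a nice clarification that the paper leaves implicit.
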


In practice, there are several reasons for which one would actually build a smaller library 
\begin{equation}
\label{equ:def of L hat}
    \mathcal{\hat L}(w) := \{V_{\alpha}(w) : \alpha > \alpha_0(w)\}
\end{equation}
for some $\alpha_0(w)>0$. 
Let us point out two of them.

First, since the LARS-homotopy algorithm computes solutions of \eqref{equ:basis pursuit denoising unique} with decreasing $\alpha$, if the mapping $\alpha \mapsto \mathbf{x}_{\alpha}(w)$ gets too unstable when reaching some $\alpha_0(w)$ then the LARS-homotopy algorithm stops.
This is for example the case when the length of an affine segment of $\alpha \mapsto \mathbf{x}_{\alpha}(w)$ is close to machine precision, or if
$\alpha_0(w)$ is itself close to machine precision.

Second, for online cost reduction one may want to set additional stopping criteria to the LARS-homotopy algorithm. 
One may rather stop the algorithm at some $\alpha_0(w)$ such that
\begin{equation}
\label{equ:def of Nlars}
    \# \hat{\mathcal{L}}(w) \leq N_{\text{LARS}}    
\end{equation} 
with for example $N_{\text{LARS}} = \tau K$ for some small, user-defined, constant $\tau$.
One may also want to stop the algorithm whenever $\|\mathbf{x}^*_{\alpha}(w)\|_0$ gets too large.
A natural criterion would be for example $\|\mathbf{x}^*_{\alpha}(w)\|_0 \leq \frac{m}{2}$ since it is known from \cite[Lemma 3.1]{cohenCompressedSensingBest2008} that one cannot recover exactly all vectors of the form $u=\mathbf{Vx}$ with $\|\mathbf{x}\|_0 > \frac{m}{2}$ from only $m$ linear measurements.

\begin{remark}
\label{rmk:sparsity without unicity}
    We could theoretically get rid of the uniqueness assumption we made in \eqref{equ:basis pursuit denoising unique}.
    Indeed, following \cite[Remark 2]{tibshiraniLassoProblemUniqueness2013} one may modify the LARS-homotopy in order to always output a $m$-sparse solution to problem \eqref{equ:basis pursuit denoising}. 
    However, the computational cost would be much larger, thus efficient implementations of the LARS-homotopy do not include this feature.
\end{remark}

\subsection{Parameter-dependent operator equations: offline-online decomposition}
\label{subsec:offline-online decomposition}

In this subsection, we consider the same framework as in \Cref{subsec:parametric pdes}, in other words problem \eqref{equ:general parametric problem} is actually a parameter-dependent operator equation with inf-sup stable linear operator. 
We first state the randomized version of \Cref{coro:LARS error bound} in the following corollary.

\begin{corollary}
\label{coro:LARS sketched error bound}
Assume that $\mu(\mathcal{M}, W) < \infty$. 
If assumptions of either \Cref{prop:sketched distance bounded by distance when finite manifold} or \Cref{prop:surrogate distance equivalence affine representation} are verified, then for any $u \in \mathcal{M}$ and $w=P_W u$, with probability at least $1-\delta N_{\text{LARS}}$, we have
\begin{equation}
    \|u - A^{\textup{dic}}_{\mathcal{S}^{\Theta}}(w)\|_U 
    \leq \delta_0 + 2\kappa^{\Theta}\mu(\mathcal{M}, W) 
    \min_{V \in \mathcal{\hat L}(w)} \| u - A^{\textup{dic}}_{V}(w) \|_U
\end{equation}
with $\kappa^{\Theta}$ defined in \Cref{coro:dic-based multi-space sketched error bound 2} and $N_{\text{LARS}}$ is from \eqref{equ:def of Nlars}.
\end{corollary}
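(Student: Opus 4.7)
The plan is to mimic the reasoning used in the proof of \Cref{prop:general oracle bound} and in \Cref{coro:dic-based multi-space sketched error bound 2}, but with the library $\mathcal{L}_n^N$ replaced by the data-driven library $\hat{\mathcal{L}}(w)$ produced by the LARS-homotopy. Since $\hat{\mathcal{L}}(w)$ contains at most $N_{\text{LARS}}$ subspaces by \eqref{equ:def of Nlars}, and since it is generated from $w$ alone (independently of the random embedding $\Theta$), a union bound over these spaces produces the claimed probability $1-\delta N_{\text{LARS}}$.

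Concretely, fix $u\in\mathcal{M}$, set $w=P_W u$, and enumerate $\hat{\mathcal{L}}(w)=\{V_1,\dots,V_{\tilde{N}}\}$ with $\tilde{N}\leq N_{\text{LARS}}$. For each $k$, the one-space recovery $v_k := A_{V_k}(w)$ is a fixed element of $U$, so applying \Cref{prop:sketched distance bounded by distance when finite manifold} or \Cref{prop:surrogate distance equivalence affine representation} at $v=v_k$ yields, with probability at least $1-\delta$, the sandwich $\sqrt{1-\epsilon}\,\mathcal{S}(v_k,\mathcal{M}) \leq \mathcal{S}^{\Theta}(v_k,\mathcal{M}) \leq \sqrt{1+\epsilon}\,\mathcal{S}(v_k,\mathcal{M})$. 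A union bound over the $\tilde{N}\leq N_{\text{LARS}}$ candidates ensures that all these inequalities hold simultaneously with probability at least $1-\delta N_{\text{LARS}}$. Conditional on this event, let $V_{\text{best}}$ minimize $V\mapsto \|u-A_V(w)\|_U$ over $\hat{\mathcal{L}}(w)$, let $V^*=V_{\mathcal{S}^{\Theta}}(w)$ be the space selected by \eqref{equ:dictionary based multispace}, and chain the sandwiches with \eqref{equ:residual distance bounds} and the optimality of $V^*$ to obtain
\begin{equation*}
\text{dist}(A_{V^*}(w),\mathcal{M}) \leq \frac{\mathcal{S}^{\Theta}(A_{V^*}(w),\mathcal{M})}{c\sqrt{1-\epsilon}} \leq \frac{\mathcal{S}^{\Theta}(A_{V_{\text{best}}}(w),\mathcal{M})}{c\sqrt{1-\epsilon}} \leq \kappa^{\Theta}\,\lambda,
\end{equation*}
where $\lambda := \text{dist}(A_{V_{\text{best}}}(w),\mathcal{M})$ and $\kappa^{\Theta}=\kappa\sqrt{1+\epsilon}/\sqrt{1-\epsilon}$.

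From here, the geometric argument of \Cref{prop:general oracle bound} applies verbatim: both $u$ and $A_{V^*}(w)$ lie in $\mathcal{M}_{\kappa^{\Theta}\lambda}\cap U_w$, so $\|u-A^{\text{dic}}_{\mathcal{S}^{\Theta}}(w)\|_U \leq \delta_{\kappa^{\Theta}\lambda} \leq \delta_0 + 2\kappa^{\Theta}\mu(\mathcal{M},W)\,\lambda$, and $\lambda\leq \|u-A_{V_{\text{best}}}(w)\|_U$ since $u\in\mathcal{M}$. The main subtlety, and the only place where the proof is not purely mechanical, is the simultaneous control of $\mathcal{S}^{\Theta}$ over all $\tilde{N}$ candidates: this is what forces the union bound and the factor $N_{\text{LARS}}$ in the failure probability. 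Note that because $\hat{\mathcal{L}}(w)$ is generated by LARS using the (non-sketched) cross-gramian, it is deterministic given $w$ and independent of $\Theta$, so the union bound does not suffer from any adaptive-selection pathology.
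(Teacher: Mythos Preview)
Your proof is correct and follows exactly the approach the paper intends: it is the argument of \Cref{prop:general oracle bound} with $\mathcal{S}$ replaced by $\mathcal{S}^{\Theta}$, combined with a union bound over the at most $N_{\text{LARS}}$ candidates in $\hat{\mathcal{L}}(w)$ using \Cref{prop:sketched distance bounded by distance when finite manifold} or \Cref{prop:surrogate distance equivalence affine representation}. Your explicit remark that $\hat{\mathcal{L}}(w)$ is built from the non-sketched cross-gramian, and is therefore independent of $\Theta$, is the key point justifying the union bound and is worth keeping.
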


Let us now discuss on how to efficiently compute $\mathcal{S}^{\Theta}$ when the affine decomposition \eqref{equ:fom affine representation} is assumed. 
In a nutshell, it is done by performing precomputation during an offline stage, independently on the unknown state $u\in\mathcal{M}$ to recover. 
We denote by $\mathbf{U}$ the matrix whose columns span the observation space and the dictionary vectors,
\begin{equation}
    \mathbf{U := \big( ~W\mid V~\big)} \in \mathbb{R}^{\mathcal{N}\times (m+K)}.
\end{equation}
Any dictionary-based recovery can then be expressed as $\mathbf{v}=\mathbf{Ua}$ with $\mathbf{a}\in\mathbb{R}^{m+K}$. 
In the proof of \Cref{prop:surrogate distance equivalence affine representation}, we observed that the computation of $\mathcal{S}$ can be written as a constrained, possibly non linear, least-squares problem. 
It is also the case for $\mathcal{S}^{\Theta}$,  since 
\begin{equation}
\label{equ:sketched distance ls formulation}
    \mathcal{S}^{\Theta}(v, \mathcal{M}) = \min_{\xi\in\mathcal{P}}
    \| G(v) \theta(\xi) - g(v) \|_{U'}^{\Theta}
    = \min_{\xi\in\mathcal{P}}
    \| \mathbf{G}_K^{\Theta}(\mathbf{a}) \theta(\xi) - \mathbf{g}_K^{\Theta}(\mathbf{a}) \|
\end{equation}
with  
\begin{equation}
\label{equ:separated variable formulation reduced sketched}
    \mathbf{G}_K^{\Theta}(\mathbf{a}) := 
    \mathbf{\Theta}\metricmat^{-1} \mathbf{G}_K(\mathbf{a}),
    \quad  
    \mathbf{g}_K^{\Theta}(\mathbf{a}) := \mathbf{\Theta}\metricmat^{-1} \mathbf{g}_K(\mathbf{a}),
\end{equation}
which are the sketched versions of  
\begin{equation}
\label{equ:separated variable formulation reduced}
\begin{gathered}
    \mathbf{G}_K(\mathbf{a}) := \Big( 
    \mathbf{B}^{(1)} \mathbf{Ua} \mid \cdots \mid \mathbf{B}^{(m_B)} \mathbf{Ua} \mid
    -\mathbf{f}^{(1)} \mid \cdots \mid -\mathbf{f}^{(m_f)}
    \Big), \\
    \mathbf{g}_K(\mathbf{a}) := \mathbf{f}^{(0)} - \mathbf{B}^{(0)} \mathbf{Ua},
\end{gathered}
\end{equation}
associated to ${G}$ and ${g}$ defined in \eqref{equ:separated variable formulation}. 
The important point is that we can precompute  $\mathbf{G}^{\Theta}_K(\cdot)$ and $\mathbf{g}_K^{\Theta}(\cdot)$ independently on $\mathbf{a}$. 
More precisely, we compute offline $\mathbf{\Theta} \metricmat^{-1} \mathbf{B}^{(i)}\mathbf{U}$ for $0\leq i\leq m_B$ and $\mathbf{\Theta} \metricmat^{-1} \mathbf{f}^{(j)}$ for $0\leq j\leq m_f$. 
Using some structured embedding, the global offline cost is then typically
\[
    \mathcal{O}\big( ~( m_BK + m_f )~ \mathcal{N}\log(\mathcal{N})~ \big).
\]
Then, for a given observation, we perform the LARS algorithm to obtain a family of  $N_{\text{LARS}}$ state estimates, where we assume that $N_{\text{LARS}}=\mathcal{O}(K)$ in view of \eqref{equ:def of Nlars}, with online cost $\mathcal{O}(m^2 K)$ \cite{efronLeastAngleRegression2004}. 
Now, for each of the vectors $\mathbf{a}$ produced, we can efficiently evaluate $\mathbf{G}^{\Theta}_K(\mathbf{a})$ and $\mathbf{g}_K^{\Theta}(\mathbf{a})$ by performing small matrix-vector products, with matrices of size $k\times (m+K)$. 
More precisely, since the dictionary-based approach produces a sparse background term, the matrices used are only of size $k\times 2m$. 
Therefore, the online preparation of \eqref{equ:sketched distance ls formulation} for every recoveries produced costs $\mathcal{O}(km(m_B+m_f)K)$. 
In summary, if we denote by $C_{ls}$ the cost for  solving a single (nonlinear) least-squares problem \eqref{equ:sketched distance ls formulation}, the total online cost to obtain $A_{\mathcal{S}^{\Theta}}(w)$ is 
\[
    \mathcal{O}\big( 
    \underbrace{\vphantom{C_{ls}} m^2 K}_{\textup{LARS}} + 
    \underbrace{\vphantom{C_{ls}} km(m_B+m_f)K}_{\textup{prepare K l.s.}} + 
    \underbrace{C_{ls}K}_{\textup{solve K l.s.}} 
    \big).
\]
Note that if $\theta(\xi) = \xi\in\mathbb{R}^d$, or more generally if $\theta$ is an affine function of $\xi$, then problem \eqref{equ:sketched distance ls formulation} is a simple linear least-squares, which can be solved at cost $C_{ls} = \mathcal{O}(kd^2)$. 
Otherwise, the cost depends on the global optimization procedure used. 
We recall that, in view of \eqref{equ:bound gaussian embedding size}, we have typically 
\[
    k = \mathcal{O}\Big(\epsilon^{-2} \big( (m_B + m_f) + \log(K\delta^{-1})\big)\Big),
\]
with $\epsilon$ the precision of the sketch and $\delta$ the probability of failure from \Cref{def:oublivous embedding def}. 
We recall that both are user-defined.

The advantage of considering $\mathcal{S}^{\Theta}$ instead of $\mathcal{S}$ is that it yields a small least-squares system, which is efficiently prepared online, whereas doing a similar procedure with $\mathcal{S}$ leads to a large least-squares system of size $\mathcal{N}\times (m_B+m_f)$. 
A common approach to compute the residual norm efficiently is to expand offline $\|r(\mathbf{Ua}, \cdot)\|^2_{U'}$ as 
\begin{equation*}
    \|r(\mathbf{Ua};\xi)\|_{U'}^2 = 
    \mathbf{a}\hermtrans \mathbf{M}_1 (\xi) \mathbf{a}
    - 2 \mathbf{a}\hermtrans \mathbf{M}_2(\xi)
    + \mathbf{M}_3(\xi),
    \quad \forall \xi \in \mathcal{P},
\end{equation*}
where $\mathbf{M}_1: \mathcal{P} \rightarrow \mathbb{R}^{(K+m)\times (K+m)}$, $\mathbf{M}_2 : \mathcal{P} \rightarrow \mathbb{R}^{(K+m)}$ and $\mathbf{M}_3 : \mathcal{P} \rightarrow \mathbb{R}$ all admit affine representations, with respectively $m_B^2$, $m_B m_f$ and $m_f^2$ affine terms. 
Although this expression is theoretically exact, in practice the high number of affine terms makes this approach more sensitive to round-off errors, as pointed out in \cite{casenaveAccurateOnlineefficientEvaluation2014, balabanovRandomizedLinearAlgebra2019, buhrNumericallyStablePosteriori2014}, and the associated offline cost scales as 
\[
    \mathcal{O}\big( ~( m_BK + m_f )^2~ \mathcal{N}~ \big)
\]
which is   prohibitive since we aim for large $K$. Another approach proposed \cite{buhrNumericallyStablePosteriori2014} is to compute an orthonormal basis of the space in which the residual lies. 
This approach is more stable than the previous one, but comes with a similar prohibitive offline cost. 
Note that another approach is proposed in \cite{casenaveAccurateOnlineefficientEvaluation2014,giraldiWeaklyIntrusiveLowRank2019}, where different empirical interpolation methods (EIM) are proposed to approximate the residual norm. 

\subsection{Limitations}
\label{subsec:limitations}

Although there are several advantages of the dictionary-based approach as stated in \Cref{subsec:dictionary-based approximation}, we end this subsection by emphasizing the main limitations of our approach.

Firstly, the selection criterion $\mathcal{S}$ we rely on is based on a model, typically a discretization of a continuous PDE.
If the model error is large, then most probably using this selection criterion is not relevant.
In such cases, which actually correspond to many practical cases, it may be more relevant to use \eqref{equ:basis pursuit denoising unique} with a fixed parameter $\alpha$, fitted using for example a cross-validation procedure, as considered in \cite{bruntonDiscoveringGoverningEquations2016,kaiserSparseIdentificationNonlinear2018,callahamRobustFlowReconstruction2019}.
Note that those works used directly $\mathbf{x}_{\alpha}$ for the coefficients of the state estimate, whereas here we considered the space spanned by its support.

Secondly, we can not control the constant $\mu(V,W)$ for every space $V\in\mathcal{L}_m(\mathcal{D}_K)$.
Therefore, we can not bound the best recovery error contrary to the piecewise multi-space setting described in \Cref{subsec:piecewise multi-space}, with a smaller number of spaces involved.
Similarly, there is no known way to compute $\mu(\mathcal{M}, W)$ for general $\mathcal{M}$.

Thirdly, if for some $\alpha$ the problem \eqref{equ:basis pursuit denoising} does not have a unique solution, then there might exist solutions which are not $m$-sparse.
Although this issue may be circumvented, this comes with challenging tradeoffs, as discussed in \Cref{rmk:uniqueness of lasso,rmk:sparsity without unicity}.

\section{Numerical examples}
\label{sec:numerical examples}

To illustrate the performances of our dictionary-based multi-space approach, we provide two numerical examples where problem \eqref{equ:general parametric problem} is a parametric PDE defined on some open, regular, bounded domain $\Omega \subset \mathbb{R}^2$. 
The solution space is the Hilbert space $\mathcal{H}_{\Gamma_D}^1(\Omega) = \{u \in \mathcal{H}^1(\Omega) : u =0 \; \text{on} \; \Gamma_D\}$, where $\Gamma_D$ is the  boundary's part where an homogeneous Dirichlet condition is applied. 
The problem is discretized using  Lagrange $\mathbb{P}_1$ finite elements with a regular triangular mesh, leading to a finite dimensional space $U$ that we equip with the natural norm  $\|\cdot\|_U = \| \nabla \cdot \|_{\mathcal{L}^2(\Omega)}$ in $\mathcal{H}_{\Gamma_D}^1(\Omega)$.

We consider local radial integral sensors, approximated in the finite element space, such that
\begin{equation}
\label{equ:thermal block sensors}
    \ell_i(u) \simeq \int_{\Omega} u(x) 
    \exp \Big( -\frac{\|x-x^{(i)}_{\text{loc}} \|^2}{\sigma^2}\Big) dx,
    \quad  1\leq i\leq m,
\end{equation}
where $x^{(i)}_{\text{loc}} \in\Omega$ is the location of the $i$-th sensor and $\sigma$ represent the filter width of the sensor. 

To approximate the residual-based distance $\mathcal{S}$ with $\mathcal{S}^{\Theta}$, we consider random embeddings of the form $\mathbf{\Theta}=\mathbf{\Omega} \mathbf{Q}$ with $\mathbf{\Omega}\in\mathbb{R}^{k\times \mathcal{N}}$ a random embedding and $\mathbf{Q}\in\mathbb{R}^{\mathcal{N}\times \mathcal{N}}$ a matrix such as $\mathbf{Q\hermtrans Q}=\metricmat$, obtained by Cholesky factorization.
Note that in both examples, the quantity $\mathcal{S}^{\Theta}(\cdot, \mathcal{P})$ can be exactly computed by solving a constrained linear least-squares system.

We will compare the performances of three types of recovery maps, denoted by $A^{(1)}$, $A^{(2)}$ and $A^{(3)}$, on a test set of $N_{\text{test}}=500$ snapshots for various dictionary sizes $K$. 
More precisely, we compare the recovery relative error, $ \|u-A^{(j)}(w)\|_U / \|u\|_U$ for $j=1,2,3$. 
The first recovery map is the best multi-space recovery map from \Cref{subsec:nested multi-space} with a library containing $m$ nested subspaces obtained by performing a POD on the $K$ first vectors of $\mathcal{M}_{\text{prior}}$, which is a discrete set of snapshots.
It is defined such that 
\begin{equation}
\label{equ:best adaptive pod}
    \| A^{(1)}(w) - u\|_U:=
    \min_{1\leq k \leq m} \|A_{V^{\text{POD}}_k}(w) - u\|_U.
\end{equation}
The second one is the dictionary-based recovery map $A^{(2)} := A^{\text{dic}}_{\mathcal{S}^{\Theta}}$ as defined in \eqref{equ:dictionary based multispace}, with the randomized selection criterion defined in \eqref{equ:definition sketched S2 and sketched residual norm}.
The third one is the best dictionary-based recovery map with optimal space selected among the adaptive library $\mathcal{\hat L}(w)$ defined in \eqref{equ:def of L hat}. 
In other words,
\begin{equation}
\label{equ:best lars recovery}
    \| A^{(3)}(w) - u\|_U:=
    \min_{V \in \mathcal{\hat L}(w)} \|A_{V}(w) - u\|_U.
\end{equation}
For both dictionary-based recoveries, the dictionary $\mathcal{D}_K$ used is composed of the $K$ first (normalized) snapshots from $\mathcal{M}_{\text{prior}}$.

In \Cref{subsec:thermal block} we consider a diffusion problem, built with the pymor library \cite{milkPyMORGenericAlgorithms2016}, which provides many standard tools for model order reduction. Then in \Cref{subsec:advection-diffusion} we consider an advection-diffusion problem, built using the DOLFINx package \cite{loggDOLFINAutomatedFinite2010}. 
Finally in \Cref{subsec:observations} we summarize our main observations. 

The implementation has been made via an open-source python development available at \url{https://github.com/alexandre-pasco/rla4mor/tree/main/inverse_problems}. 
The implementation of the sparse Cholesky factorization is from the scikit-sparse package, which includes a wrapper for the CHOLMOD package \cite{chenAlgorithm887CHOLMOD2018}.  
The implementation of the LARS algorithm is from the SPAMS toolbox library \cite{mairalOnlineDictionaryLearning2009}.

\subsection{Thermal block}
\label{subsec:thermal block}

In this subsection we consider a classical thermal block problem with Dirichlet boundary conditions. 
It models a heat diffusion in a 2D domain $\Omega = (0,1)^2$ composed of $9$ blocks $\Omega_1, \hdots, \Omega_9$ of equal sizes and various thermal conductivities $\xi_1, \cdots, \xi_9$. 
The heat comes from a constant volumic source term equal to $1$ and homogeneous Dirichlet boundary conditions are imposed on $\Gamma_D = \partial \Omega$. The problem we consider can be written as
\begin{align}
\label{equ:thermal block}
\left\{\begin{aligned}
    -\nabla \cdot \big(\kappa\nabla u\big) =~& 1 &\text{in } &\Omega, \\
    u =~& 0 &\text{on } &\Gamma_D, \\
    \kappa =~& \xi_i &\text{in } &\Omega_i, ~1\leq i\leq 9,
\end{aligned}\right.
\end{align}
where the diffusion coefficients $\xi_i$ are drawn independently in $[\frac{1}{10}, 1]$ with a log-uniform distribution, leading to a parameter set $\mathcal{P} = [\frac{1}{10}, 1]^{9}$. 
The maximal finite element mesh element diameter is $2^{-6}$, leading to $\mathcal{N}=8~321$ degrees of freedom. 
This leads to the simple affine representations 
\begin{equation}
\label{equ:fom thermal}
    \mathbf{B}(\xi) = \mathbf{B}^{(0)} + \sum_{i=1}^9 \xi_i \mathbf{B}^{(i)},
    \hspace{5mm}
    \mathbf{f}(\xi) = \mathbf{f}.
\end{equation}
We consider $64$ sensors uniformly placed  at positions $ (\frac{i}{9}, \frac{j}{9})_{1\leq i,j \leq 8}$, with a sensor width $\sigma=2^{-6}$. 
The problem can be visualized in \Cref{fig:thermal block problem}.

\begin{figure}[!ht]
    \centering
    \includegraphics[page=3, width=1\textwidth]{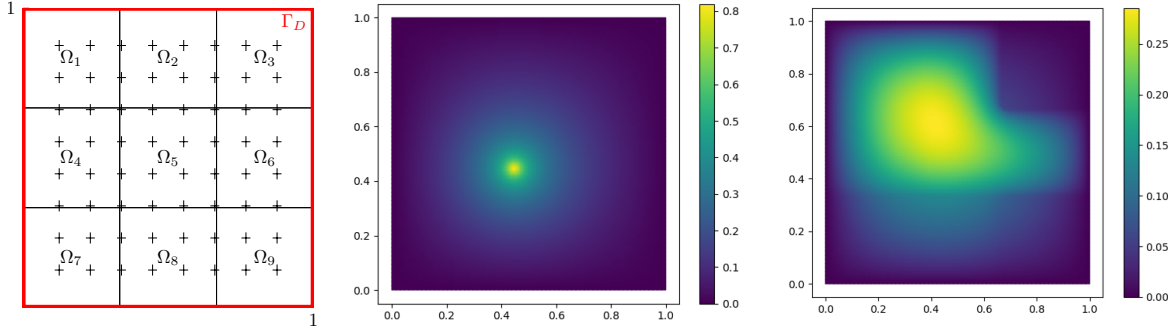}
    \caption[Thermal block problem]{The thermal block problem.
    On the left, the geometry of the problem, with sensors locations (crosses). 
    On the middle, the (normalized) Riesz representer of a sensor. 
    On the right, an example of snapshot.}
    \label{fig:thermal block problem}
\end{figure}

We consider $3$ different values for $m$: $64$, $36$ and $9$. 
For $m=64$, all the sensors are used, for $m=36$ only sensors at positions $(\frac{i}{9}, \frac{j}{9})_{i,j \in \{1,2,4,5,7,8\}}$ are used, and for $m=9$ only sensors at positions $(\frac{i}{9}, \frac{j}{9})_{i,j \in \{1,4,7\}}$ are used. 
We computed a set of snapshots $\mathcal{M}_{\text{prior}}$ of size $5000$ to form our prior knowledge on the manifold $\mathcal{M}$. 

First in \Cref{fig:thermal block pod eps mu} we show the
evolution of the constants $\varepsilon_n$, $\mu(V_n, W)$ and the product $\varepsilon_n \mu(V_n,W)$ involved in the error bound \eqref{equ:pbdw error bound}, where $V_n$ is the space spanned by the $n$ first POD modes computed on $\mathcal{M}_{\text{prior}}$, and with $m=64$ measurements.
The constant $\varepsilon_n$ is approximated on a test set of $500$ snapshots by $\varepsilon_n \simeq \max_{u \in \mathcal{M}_{\text{test}}} \|u - P_{V_n} u\|_U$, and the constant $\mu(V_n, W)$ is computed exactly by SVD.

This figure shows the major issue of PBDW, which is the trade off that has to be made between the approximation of $\mathcal{M}$ by a reduced space $V_n$, i.e. low $\varepsilon_n$, and the angle between $V_n$ and the observation space $W$, i.e. low $\mu(V_n, W)$.
This issue is even more problematic when the $V_n$ and $W$ are built independently, as it is the case here.

\begin{figure}[!ht]
    \centering
    \includegraphics[page=7, height=0.4\textwidth]{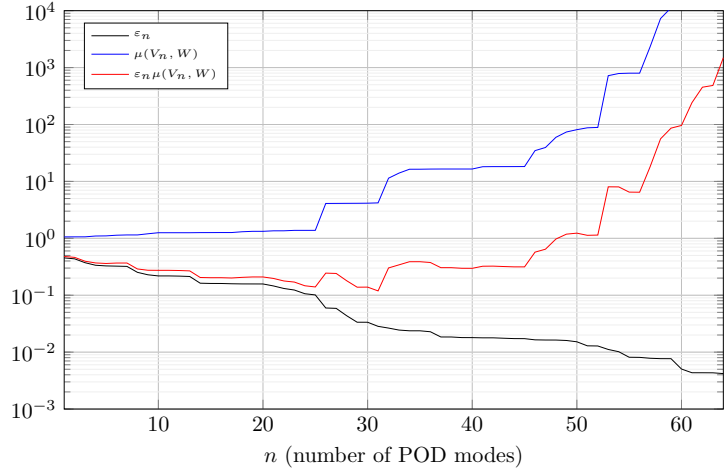}
    \caption[Thermal block pod eps mu]{
    Evolution of the constants in the error bound \eqref{equ:pbdw error bound}, on $500$ test snapshots, with increasing number of POD modes and $m=64$ measurements.
    }
    \label{fig:thermal block pod eps mu}
\end{figure}

Then in \Cref{fig:thermal block results}, we test the performances of our dictionary-based approach with dictionaries of size $K \in \{ 100,200,500,1000,2000,5000 \}$. 
We choose $\mathbf{\Omega}$ as a Gaussian embedding with $k=100$ rows.
Note that since $\mathcal{N}$ is rather small, we could compute $\mathcal{S}$ online by solving the least-squares system of size $\mathcal{N} \times d$ with rather low computational cost.

We first observe that the dictionary-based approaches $A^{(2)}$ and $A^{(3)}$ both outperform $A^{(1)}$ in the mean error sense, for all values of $m$ and $K$ considered.
For the worst case error, so does $A^{(3)}$.
However for $m=9$ it takes a large $K$ for $A^{(2)}$ to outperform $A^{(1)}$.
This shows that the library $\mathcal{\hat L}(w)$, defined in \eqref{equ:library with lars}, generated by the compressive sensing approach has a good potential, but the selection criterion with $\mathcal{S}^{\Theta}$ is, as expected, not optimal.

Finally, we observe a larger performance gap between $A^{(2)}$ and $A^{(3)}$ for $m=9$ than for $m=36,64$.
This may be interpreted as a degradation of the constant $\mu(\mathcal{M}, W)$ in \Cref{coro:LARS error bound}, although it is only speculative.

From those observations, we conclude on better global performances of $A^{(2)}$ compared to $A^{(1)}$.

\begin{figure}[!ht]
    \centering
    \includegraphics[page=5, width=1\textwidth]{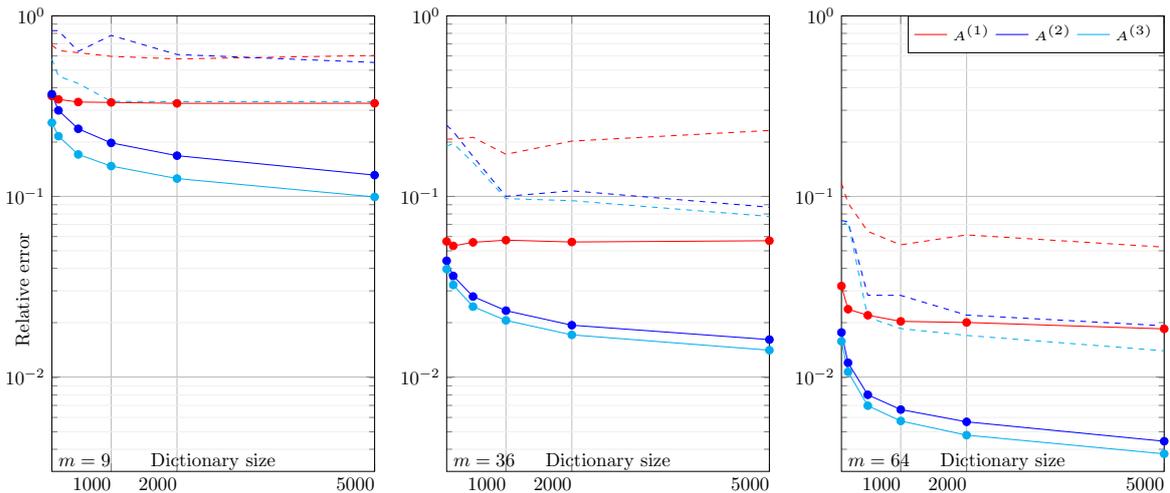}
    \caption[Thermal block results]{Evolution of the recovery errors in $U$-norm, on $500$ test snapshots, with growing dictionary sizes and different values of $m$. 
    We compare the PBDW recovery based on the best adaptive POD truncation  $A^{(1)}$, in red, defined in \eqref{equ:best adaptive pod}, the dictionary-based recovery with randomized selection criterion $A^{(2)} = A^{\text{dic}}_{\mathcal{S}^{\Theta}}$, in blue, as well as the best one produced by the LARS algorithm $A^{(3)}$, in cyan, defined in \eqref{equ:best lars recovery} . 
    The full line is the mean relative error and the dotted line is the maximal relative error.
    }
    \label{fig:thermal block results}
\end{figure}

\subsection{Advection-diffusion}
\label{subsec:advection-diffusion}

In this subsection we consider an advection diffusion problem with multiple transport phenomena, inspired from \cite{balabanovRandomizedLinearAlgebra2021}. 
It models a heat diffusion and advection in a 2D domain $\Omega = \mathcal{B}(0,1.5) \setminus \bigcup_{i=1}^5 \Omega_i$, with $\Omega_i=\mathcal{B}(x^{(i)}, 0.1)$ a ball of radius $0.1$ centered at $x^{(i)}=(\cos(2i\pi/5), \sin(2i\pi/5))$, for $1\leq i\leq 5$. 
A constant source term is considered on the circular subdomain $\Omega_S = \mathcal{B}(0,0.1)$. 
The (parametric) advection field $\mathcal{V}$ is a sum of $5$ potential fields around the domains $\Omega_i$. 
The unknown temperature field $u$ satisfies the equations
\begin{align}
\label{equ:advection diffusion problem}
\left\{\begin{aligned}
    -\kappa \Delta u  + \mathcal{V}(\xi) \cdot \nabla u =~& \frac{100}{\pi} \mathbbm{1}_{\Omega_S} &\text{in } &\Omega, \\
    u =~& 0 &\text{on } &\Gamma_D, \\
    n \cdot \nabla u =~& 0 &\text{on } &\Gamma_N,
\end{aligned}\right.
\end{align}
with a diffusion coefficient $\kappa = 0.01$ and the advection field
\[
    \mathcal{V}(\xi) = 
    \sum_{i=1}^5 \frac{1}{\|x-x^{(i)}\|} \Big(
    \xi_i e_r(x^{(i)}) 
    + \xi_{i+5} e_{\theta}(x^{(i)})
    \Big),
\]
where $e_r(x^{(i)})$ and $e_{\theta}(x^{(i)})$ are the basis vectors of the polar coordinate system centered at $x^{(i)}$. 
Dirichlet boundary conditions are imposed on $\Gamma_D = \partial \mathcal{B}(0,1.5)$ and homogeneous Neumann conditions are imposed on $\Gamma_N = \Omega \setminus \Gamma_D$. 
The parameter $\xi$ is chosen uniformly in $\mathcal{P}=[-1,-\frac{1}{2}]^5 \times [-2, -1]^5$. 
The problem is discretized using Lagrange $\mathbb{P}_1$ finite elements on a triangular mesh refined around the pores, leading to  $\mathcal{N} = 152~297$ degrees of freedom. 
The operator and right-hand admit the  affine representations 
\begin{equation}
\label{equ:fom advection diffusion}
    \mathbf{B}(\xi) = \mathbf{B}^{(0)} + \sum_{i=1}^{10} \xi_i \mathbf{B}^{(i)},
    \hspace{5mm}
    \mathbf{f}(\xi) = \mathbf{f}.
\end{equation}
We consider $101$ sensors.
A first sensor is placed at point $(0,0)$, and for $1\leq j\leq 4$,  $10\times j$ sensors are placed uniformly on a circle of radius $0.2\times j$ centered at $(0,0)$.
The sensor width is chosen as $\sigma=2.10^{-2}$. 
The problem can be visualized in \Cref{fig:advection diffusion problem}. 

\begin{figure}[!ht]
    \centering
    \includegraphics[page=4, width=1\textwidth]{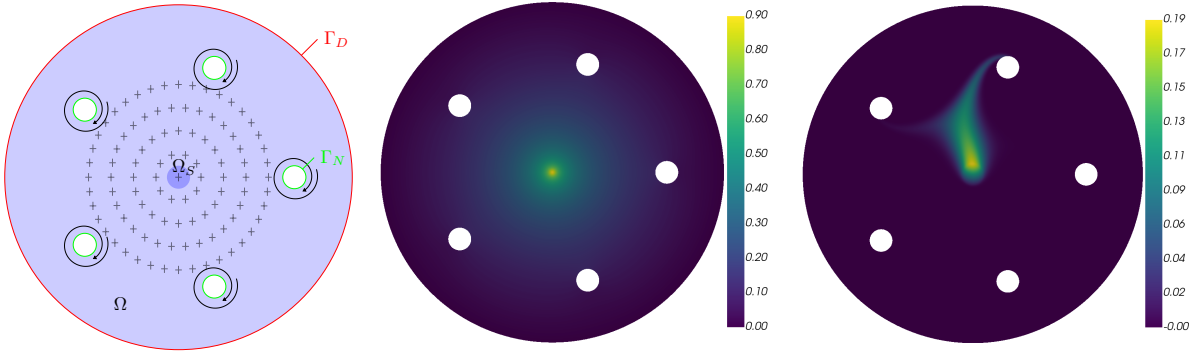}
    \caption[Advection diffusion problem]{The advection diffusion problem. 
    On the left, the geometry of the problem, with sensors locations (crosses) and advection fields (circular arrows).
    On the middle, the (normalized) Riesz representer of the central sensor. 
    On the right, an example of snapshot.}
    \label{fig:advection diffusion problem}
\end{figure}

We consider 3 different values for $m$: $101$, $61$ and $31$.
For $m=101$ every sensors are used, for $m=61$ we dropped the sensors of the outer circle, and for $m=31$ we dropped the sensors of the two outer circles. 
We computed a set $\mathcal{M}_{\text{prior}}$ of $2048$ snapshots to form our prior knowledge on the manifold $\mathcal{M}$. 

First in \Cref{fig:advection diffusion pod eps mu} we show the
evolution of the constants $\varepsilon_n$, $\mu(V_n, W)$ and the product $\varepsilon_n \mu(V_n,W)$ involved in the error bound \eqref{equ:pbdw error bound}, where $V_n$ is the space spanned by the $n$ first POD modes computed on $\mathcal{M}_{\text{prior}}$, and with $m=101$ measurements.
The constant $\varepsilon_n$ is approximated on a test set of $500$ snapshots by $\varepsilon_n \simeq \max_{u \in \mathcal{M}_{\text{test}}} \|u - P_{V_n} u\|_U$, and the constant $\mu(V_n, W)$ is computed exactly by SVD.

This figure emphasizes even more the issue exhibited in \Cref{fig:thermal block pod eps mu}.
Here, $\varepsilon_n$ decays much more slowly than $\mu(V_n, W)$ grows.
Thus the trade-off for the error bound \eqref{equ:pbdw error bound} leads to a very low $n$.

\begin{figure}[!ht]
    \centering
    \includegraphics[page=8, height=0.4\textwidth]{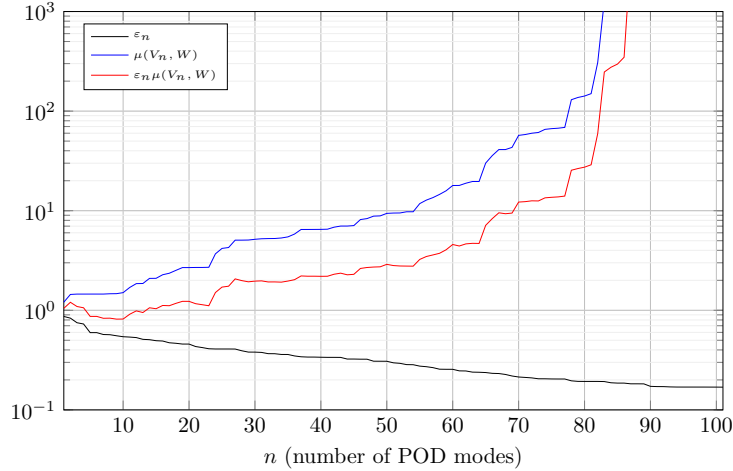}
    \caption[Thermal block pod eps mu]{
    Evolution of the constants in the error bound \eqref{equ:pbdw error bound}, on $500$ test snapshots, with growing POD truncation and $m=64$ measurements.
    }
    \label{fig:advection diffusion pod eps mu}
\end{figure}

Then in \Cref{fig:advection diffusion results} we test the performances of our dictionary-based approach with dictionaries of size $K \in \{ 256,512,1024,2048 \}$. 
We choose $\mathbf{\Omega} = \mathbf{\Omega}_2 \mathbf{\Omega}_1$ with $\mathbf{\Omega}_1$ a P-SRHT embedding with $k'=16~263$ rows and $\mathbf{\Omega}_2$ a Gaussian embedding with $k=850$ rows.
It is important to note that with the rather large $\mathcal{N}$ and $K$ involved, the randomized approach from \Cref{sec:randomized multi-space problem} was crucial to efficiently evaluate online $\mathcal{S}^{\Theta}$ while requiring a reasonable offline cost, which may not possible with $\mathcal{S}$. 

We first observe that the dictionary-based approaches $A^{(2)}$ and $A^{(3)}$ both outperform $A^{(1)}$ in the mean error sense, for all values of $m$ and $K$ considered.
For the worst case error, it takes a dictionary of size $K=2048$
for $A^{(3)}$ to outperform $A^{(1)}$ for all values of $m$, otherwise the performances are similar.
A similar behavior is observed for $A^{(2)}$, except that for small values of $K$, $A^{(1)}$ performs better.
Again, this shows that $\mathcal{\hat L}(w)$ has a good potential, at least when $K$ is large enough, but the selection criterion with $\mathcal{S}^{\Theta}$ is, as expected, not optimal.

Finally, we observe that the performance gap between $A^{(2)}$ and $A^{(3)}$ decreases when $m$ grows.
Again, this may be interpreted as a degradation of the constant $\mu(\mathcal{M}, W)$ in \Cref{coro:LARS error bound}, although it is still only speculative.

From those observations, we conclude on better global performances of $A^{(2)}$ compared to $A^{(1)}$, especially for large enough dictionaries.

\begin{figure}[!ht]
    \centering
    \includegraphics[page=6, width=1\textwidth]{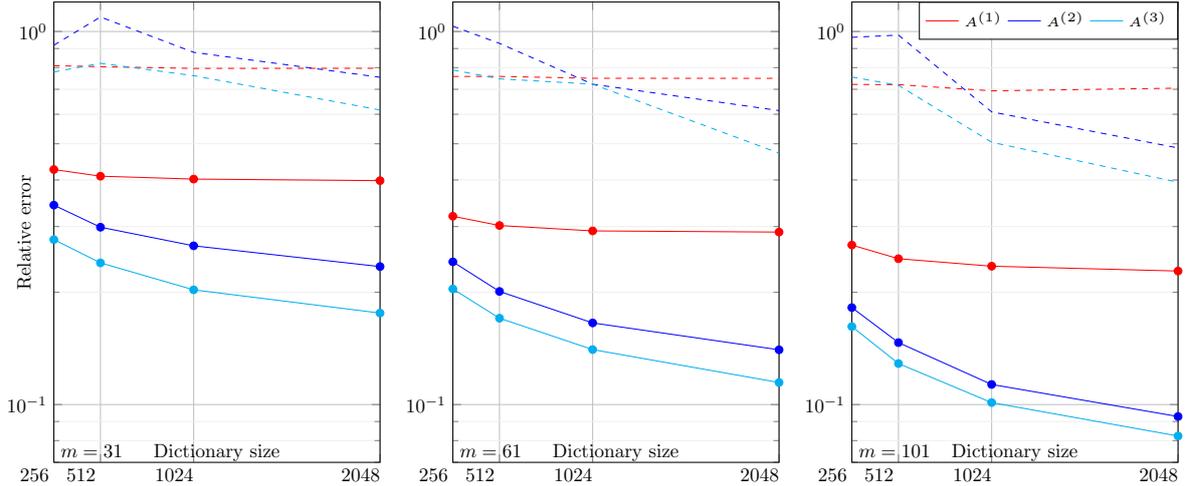}
    \caption[Advection diffusion results]{Evolution of the recovery errors in $U$-norm, on $500$ test snapshots, with growing dictionary sizes, and different values of $m$.
    We compare the PBDW recovery based on the best adaptive POD truncation  $A^{(1)}$, in red, defined in \eqref{equ:best adaptive pod}, the dictionary-based recovery with randomized selection criterion $A^{(2)} = A^{\text{dic}}_{\mathcal{S}^{\Theta}}$, in blue, as well as the best one produced by the LARS algorithm $A^{(3)}$, in cyan, defined in \eqref{equ:best lars recovery}. 
    The full line is the mean relative error and the dotted line is the maximal relative error.
    }
    \label{fig:advection diffusion results}
\end{figure}

\subsection{Summary of observations}
\label{subsec:observations}

From the numerical results of the previous subsections, we draw two main observations. 

Firstly, our dictionary-based approach with randomized selection criterion outperformed the best possible adaptive POD-based PBDW recovery in the mean error sense, especially with rather large dictionaries.
While it was not always the case for the worst case error, especially with small dictionaries and few observations, we observed that it would have been the case with an optimal selection within $\mathcal{L}(w)$. 
As a result, we conclude that our approach is truly interesting when the dictionary considered is rather large. 

Secondly, we observed that the optimality of the randomized selection criterion $\mathcal{S}^{\Theta}$ deteriorates when $m$ is too small.
This may be explained by an increase of $\mu(\mathcal{M}, W)$ in \Cref{coro:LARS error bound}, although we are not able to validate this hypothesis.

\section{Conclusions}
\label{sec:conclusions}

In this work, we proposed a dictionary-based model reduction approach for inverse problems, similar to what already exists for direct problems, with a near-optimal subspace selection based on some surrogate distance to the solution manifold, among the solutions of a set of $\ell_1$-regularized problems. 
We focused on the framework of parameter-dependent operator equations with affine parametrization, for which we provided an efficient procedure based on randomized linear algebra, ensuring stable computation while preserving theoretical guarantees.

Future work shall study the performances of our approach in a noisy observations framework. 
One may also consider a bi-dictionary approach, in which both the observation space and the background space would be selected adaptively with a dictionary-based approach.

\section*{Conflict of Interest}
The authors declared that they have no conflict of interest.

\bibliographystyle{plain}  
\bibliography{main}

\end{document}